\newtheorem{thm}{Theorem}[section]
\newtheorem{cor}[thm]{Corollary}
\newtheorem{lem}[thm]{Lemma}
\newtheorem{rmk}[thm]{Remark}
\newtheorem*{thm*}{Theorem}
\newtheorem*{con*}{Conjecture}
\newtheorem*{lem*}{Lemma}
\newtheorem{prop}[thm]{Proposition}
\newtheorem{defn}{Definition}[section]
{ \theoremstyle{remark} } 
\newcommand{\bigslant}[2]{{\raisebox{.2em}{$#1$}\left/\raisebox{-.2em}{$#2$}\right.}}
\begin{document}
\baselineskip=17pt
\hbox{}
\medskip

\title{Cayley Digraphs of Matrix Rings over Finite Fields}
\author{Ye\c{s}\.im Dem\.iro\u{g}lu Karabulut}

\email{yesim.demiroglu@rochester.edu}

\address{Department of Mathematics, University of Rochester, Rochester, NY}

\maketitle

\begin{abstract}
We use the \emph{unit-graphs} and the \emph{special unit-digraphs} on matrix rings to show that every $n \times n$ nonzero matrix over $\Bbb F_q$ can be written as a sum of two $\operatorname{SL}_n$-matrices when $n>1$. We compute the eigenvalues of these graphs in terms of Kloosterman sums and study their spectral properties; and prove that if $X$ is a subset of $\operatorname{Mat}_2 (\Bbb F_q)$ with size $|X| > \frac{2 q^3 \sqrt{q}}{q - 1}$, then $X$ contains at least two distinct matrices whose difference has determinant $\alpha$ for any $\alpha \in \Bbb F_q^{\ast}$. Using this result we also prove a sum-product type result: if $A,B,C,D \subseteq \Bbb F_q$ satisfy $\sqrt[4]{|A||B||C||D|}= \Omega (q^{0.75})$ as $q \rightarrow \infty$, then $(A - B)(C - D)$ equals all of $\Bbb F_q$. In particular, if $A$ is a subset of $\Bbb F_q$ with cardinality $|A| > \frac{3} {2} q^{\frac{3}{4}}$, then the subset $(A - A) (A - A)$ equals all of $\Bbb F_q$. We also recover a classical result: every element in any finite ring of odd order can be written as the sum of two units.

\

\noindent \textbf{Keywords:} Spectral Graph Theory, Matrix Rings, Sum-Product Problem.

\

\noindent \textbf{AMS 2010 Subject Classification:}
Primary: 05C50;
Secondary: 16U60, 15B33.
\end{abstract}

\section{Introduction and Statements of Results}

Let $R$ be a finite ring with identity, and let $U$ denote the set of units. We define the \emph{unit-graph} $G$ on $R$ to equal the directed graph (digraph) whose vertex set is $R$, for which there is a directed edge from $a$ to $b$ if and only if $b - a \in U$. This is equivalent to saying that $G$ is the Cayley digraph on $R$ associated to the subset $U$, i.e. $G = \operatorname{Cay}(R, U)$. This digraph can also be viewed as an undirected graph, since the fact that $u \in U \Longleftrightarrow -u \in U$ implies that there exists an edge from $a$ to $b$ if and only if there is also an edge from $b$ to $a$.

In this paper we first study these unit-graphs in the special cases where $R$ is a finite simple ring, or equivalently where $R$ is isomorphic to the matrix ring $\operatorname{Mat}_n(\Bbb F_q)$ for some finite field $\Bbb F_q$. It is easy to see that such graphs are regular, and we show that they are connected as well. If $n = 1$, $R \cong \Bbb F_q$, so all such graphs are trivially complete. If $n \geqslant 2$, it is well known (and also shown here) that any element of $\operatorname{Mat}_n (\Bbb F_q)$ can be written as a sum of two invertible matrices, which easily implies that the diameter of the unit-graph on $\operatorname{Mat}_n (\Bbb F_q)$ is $2$ when $n \geqslant 2$.

Among other results, we prove in this paper that the adjacency matrix of the unit-graph on $\operatorname{Mat}_n (\Bbb F_q)$ has at most $n + 1$ distinct eigenvalues, from which we deduce that the unit-graph on $\operatorname{Mat}_n (\Bbb F_q)$ in the case $n = 2$ is strongly regular for any finite field $\Bbb F_q$. In addition, we calculate the spectrum and the parameters of these strongly regular graphs for varying $q$, and show that these parameters agree with those of another family of strongly regular graphs, namely \emph{Latin square graphs}.

Along with studying these unit-digraphs on the rings $R = \operatorname{Mat}_n (\Bbb F_q)$, we also define the \emph{special unit-digraphs} on such rings $R$, by replacing the subsets $U =\operatorname{GL}_n (\Bbb F_q)$ by the subsets $U' =\operatorname{SL}_n (\Bbb F_q)$, so that these digraphs equal $\operatorname{Cay}(R, U')= \operatorname{Cay} (\operatorname{Mat}_n (\Bbb F_q), \operatorname{SL}_n (\Bbb F_q))$. We then show that every nonzero element of $\operatorname{Mat}_n (\Bbb F_q)$ for $n \geqslant 2$ can be written as a sum of two $\operatorname{SL}_n$-matrices, and hence these digraphs also are connected with diameter $2$, although in this case we show that the corresponding adjacency matrices can have a larger number of distinct eigenvalues (at most $n + q - 1$ of these). Furthermore, we compute the spectrums of these digraphs in terms of Kloosterman sums in the case of $n = 2$ and apply the spectral gap theorem to prove the following:

\begin{thm} \label{intro thm}
Let $\alpha \in \Bbb F_q ^{\ast}$. If $X,Y \subseteq \operatorname{Mat}_2 (\Bbb F_q)$ satisfies $\sqrt{|X| |Y|} > \frac{2 q^3 \sqrt{q}}{q - 1}$, then there exists some $M \in X$ and $N \in Y$ such that $M-N$ has determinant $\alpha$. In particular, if $|X| > \frac{2 q^3 \sqrt{q}}{q - 1}$, then $X$ contains at least two distinct matrices whose difference has determinant $\alpha$. Thus, if $|X| = \Omega (q^{2.5})$ as $q \rightarrow \infty$, then it contains at least two distinct matrices whose difference has determinant $\alpha$.
\end{thm}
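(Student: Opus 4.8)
The plan is to realize the statement as a positivity result for edge counts in the Cayley graph $G_\alpha = \operatorname{Cay}(\operatorname{Mat}_2(\mathbb{F}_q), S_\alpha)$, where $S_\alpha = \{M \in \operatorname{Mat}_2(\mathbb{F}_q): \det M = \alpha\}$, and then to invoke the spectral gap (expander mixing) theorem. First I would record the basic parameters: since multiplying one row by $\alpha$ is a bijection from the determinant-$1$ matrices onto the determinant-$\alpha$ matrices, $|S_\alpha| = |\operatorname{SL}_2(\mathbb{F}_q)| = q^3 - q$ for every $\alpha \in \mathbb{F}_q^{\ast}$, so $G_\alpha$ is $k$-regular on $N = q^4$ vertices with $k = q^3 - q$. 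Because $\det(-M) = \det M$ in the $2 \times 2$ case we have $S_\alpha = -S_\alpha$, so $G_\alpha$ is genuinely undirected and its adjacency matrix has real spectrum. By construction there is an edge joining $N$ to $M$ exactly when $\det(M - N) = \alpha$, so producing the desired pair is the same as showing that the number of edges between $X$ and $Y$ is positive.

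Next I would feed in the eigenvalue information. The additive characters of $\operatorname{Mat}_2(\mathbb{F}_q)$ are $M \mapsto \psi(\operatorname{tr}(BM))$ for $B \in \operatorname{Mat}_2(\mathbb{F}_q)$ and a fixed nontrivial $\psi$, and the eigenvalue of $G_\alpha$ attached to $B$ is $\lambda_B = \sum_{\det M = \alpha}\psi(\operatorname{tr}(BM))$; the trivial character ($B = 0$) yields the degree $\lambda_0 = k = q^3 - q$. Using the evaluation of these sums in terms of Kloosterman sums, together with the Weil bound $|K(a,b)| \le 2\sqrt{q}$, every nontrivial eigenvalue satisfies $|\lambda_B| \le 2(q+1)\sqrt{q}$, uniformly in $\alpha \in \mathbb{F}_q^{\ast}$. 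This is exactly the spectral computation carried out earlier in the paper for the special unit-digraph; the same calculation applies once $\operatorname{SL}_2$ is replaced by $S_\alpha$. Set $\lambda = 2(q+1)\sqrt{q}$.

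The spectral gap theorem then gives, for all $X, Y \subseteq \operatorname{Mat}_2(\mathbb{F}_q)$,
\[
\left| e(X,Y) - \frac{k}{N}|X||Y| \right| \le \lambda \sqrt{|X||Y|},
\]
where $e(X,Y)$ counts the pairs $(M,N) \in X \times Y$ with $\det(M - N) = \alpha$. Hence $e(X,Y) > 0$ as soon as $\frac{k}{N}|X||Y| > \lambda \sqrt{|X||Y|}$, i.e.
\[
\sqrt{|X||Y|} > \frac{\lambda N}{k} = \frac{2(q+1)\sqrt{q}\cdot q^4}{q(q^2-1)} = \frac{2 q^3 \sqrt{q}}{q-1},
\]
which is precisely the hypothesis, yielding the first assertion. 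For the ``in particular'' statements I would take $X = Y$: the hypothesis $|X| > \frac{2q^3\sqrt{q}}{q-1}$ forces $e(X,X) > 0$, and since the diagonal pairs $M = N$ contribute $\det(M - N) = \det 0 = 0 \ne \alpha$, every counted edge joins two \emph{distinct} matrices, so $X$ contains distinct $M, N$ with $\det(M - N) = \alpha$. The asymptotic form $|X| = \Omega(q^{2.5})$ is then immediate since $\frac{2q^3\sqrt{q}}{q-1} = O(q^{2.5})$.

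The main obstacle is the uniform eigenvalue bound: one must verify that replacing $\operatorname{SL}_2(\mathbb{F}_q)$ by the arbitrary determinant-$\alpha$ set $S_\alpha$ does not degrade the Weil estimate, so that the clean constant $2(q+1)\sqrt{q}$ — and hence the exact threshold $\frac{2q^3\sqrt{q}}{q-1}$ — holds for every $\alpha \in \mathbb{F}_q^{\ast}$ rather than only for $\alpha = 1$. Everything after that is a direct application of the spectral gap theorem together with the bookkeeping of constants.
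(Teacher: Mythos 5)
Your proposal is correct and is essentially the paper's own argument: the paper applies its spectral gap theorem (Theorem~\ref{SGT}, which is exactly your mixing-lemma inequality with threshold $n_{\ast}=\lambda N/k$) to the same Cayley graph $G_{\alpha}$, using the Kloosterman-sum spectrum of Theorem~\ref{spect special} and the Weil bound to get $n_{\ast}=\frac{2q^{5}\sqrt{q}}{q^{3}-q}<\frac{2q^{3}\sqrt{q}}{q-1}$. The uniformity in $\alpha$ that you flag as the main obstacle is handled in the paper by the observation, made just before its proof, that $M\mapsto\operatorname{diag}(\alpha,1)M$ gives an isomorphism $G_{1}\cong G_{\alpha}$, so every $G_{\alpha}$ shares the spectrum computed for $\operatorname{SL}_{2}$; your slightly weaker eigenvalue constant $2(q+1)\sqrt{q}$ (versus the sharp $2q\sqrt{q}$) costs nothing, since it still lands exactly on the stated threshold.
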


Using this result we also prove that if $A,B,C,D \subseteq \Bbb F_q$ satisfy $\sqrt[4]{|A||B||C||D|}= \Omega (q^{0.75})$ as $q \rightarrow \infty$, then $(A - B)(C - D)$ equals all of $\Bbb F_q$. In particular, if $A$ is a subset of $\Bbb F_q$ with cardinality $|A| > \frac{3} {2} q^{\frac{3}{4}}$, then the subset $(A - A)(A - A)$ equals all of $\Bbb F_q$.

As a separate result, using Artin-Wedderburn Theory together with one of our matrix ring propositions, we show that every element in any finite ring of odd order can be written as the sum of two units\footnote{This result has been known for a while (\cite{Henriksen}), but was previously proven using different and arguably more complicated methods.}.

\section{Some Linear Algebra}

Let $\Bbb F_q$ be the finite field of order $q$ and let $\operatorname{Mat}_n(\Bbb F_q)$ be the ring of $n \times n$ matrices over $\Bbb F_q$. The general linear group $\operatorname{GL}_n(\Bbb F_q) = \{ A \in \operatorname{Mat}_n(\Bbb F_q) \mid \det(A) \neq 0 \}$ is the group of invertible matrices in $\operatorname{Mat}_n(\Bbb F_q)$ under matrix multiplication. We can easily calculate the order of this group by using the fact that a matrix is invertible if and only if its columns (or rows) are linearly independent. If $A \in \operatorname{GL}_n(\Bbb F_q)$ and $A=[v_1 \  v_2 \ \cdots \ v_n]$ for some column vectors $v_1,v_2,\cdots,v_n \in \Bbb F_q^n$, then $v_1$ can be anything but not the zero vector; $v_2$ can be anything but not a scalar multiple of $v_1$; $v_3$ can be anything but not a linear combination of $v_1$ and $v_2$ etc. This means we have $(q^n-1)$ many possibilities for $v_1$, once we pick $v_1$, $v_2$ has $(q^n-q)$ many possibilities, once we have $v_1$ and $v_2$, $v_3$ has $(q^n-q^2)$ etc. Hence, we have \begin{equation} \label{sizeGLn} 
\big|\operatorname{GL}_n(\Bbb F_q)\big|=(q^n-1)(q^n-q)\cdots (q^n-q^{n-1})=q^{n^2} \left(1-\frac{1}{q}\right) \left(1-\frac{1}{q^2}\right) \cdots \left(1-\frac{1}{q^n}\right).
\end{equation}
We use the same notation with \cite{Murphy} and define \[ \phi(n,q)=\dfrac{\big|\operatorname{GL}_n(\Bbb F_q)\big|}{\big|\operatorname{Mat}_n(\Bbb F_q)\big|}= \left(1-\frac{1}{q}\right) \left(1-\frac{1}{q^2}\right) \cdots \left(1-\frac{1}{q^n}\right).\] 
Notice that if $\phi(n,q)>\frac{1}{2}$, then every matrix in $\operatorname{Mat}_n(\Bbb F_q)$ can be written as a sum of two invertible matrices by the \emph{pigeonhole principle}: For any $A \in \operatorname{Mat}_n (\Bbb F_q)$, we have $|\operatorname{GL}_n(\Bbb F_q)| = |A - \operatorname{GL}_n(\Bbb F_q)|$. Hence if $\phi(n,q) > \frac{1}{2}$, then $\operatorname{GL}_n(\Bbb F_q) \cap (A - \operatorname{GL}_n(\Bbb F_q)) \neq \emptyset$, and the result follows.

\begin{prop} \label{qBiggerThan2}
Every element of $\operatorname{Mat}_n(\Bbb F_q)$ can be written as a sum of two invertible matrices for all $n \geqslant 1$ and all finite fields $\Bbb F_q$ as long as $q>2$.
\end{prop}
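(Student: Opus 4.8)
The plan is to prove this via the pigeonhole argument sketched just before the statement, which reduces everything to verifying the numerical inequality $\phi(n,q) > \frac{1}{2}$ for all $n \geqslant 1$ and all $q > 2$. First I would record the key monotonicity fact: since $\phi(n,q) = \prod_{k=1}^{n}\left(1 - q^{-k}\right)$ and each factor $1 - q^{-k}$ lies in $(0,1)$, the sequence $\phi(n,q)$ is strictly decreasing in $n$ for fixed $q$. Hence the infimum over $n$ of $\phi(n,q)$ is the infinite product $\phi(\infty,q) := \prod_{k=1}^{\infty}\left(1 - q^{-k}\right)$, and it suffices to show $\phi(\infty,q) > \frac{1}{2}$ for every $q \geqslant 3$.

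The core of the argument is therefore a clean lower bound on this infinite product. The plan is to bound each factor below by taking logarithms and using $\log(1 - x) \geqslant -\frac{x}{1-x}$ (equivalently $-\log(1-x) \leqslant \frac{x}{1-x}$) for $0 \leqslant x < 1$. Applying this with $x = q^{-k}$ gives
\begin{equation*}
-\log \phi(\infty,q) = \sum_{k=1}^{\infty} -\log\!\left(1 - q^{-k}\right) \leqslant \sum_{k=1}^{\infty} \frac{q^{-k}}{1 - q^{-k}} \leqslant \frac{1}{1 - q^{-1}} \sum_{k=1}^{\infty} q^{-k} = \frac{1}{1 - q^{-1}} \cdot \frac{q^{-1}}{1 - q^{-1}} = \frac{q}{(q-1)^2}.
\end{equation*}
For $q \geqslant 3$ the right-hand side is at most $\frac{3}{4} < \log 2$, so $-\log \phi(\infty,q) < \log 2$, giving $\phi(\infty,q) > \frac{1}{2}$ as desired. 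Combined with the monotonicity step, this yields $\phi(n,q) > \frac{1}{2}$ for all $n \geqslant 1$ whenever $q \geqslant 3$, and the conclusion follows from the pigeonhole principle already explained in the text.

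I would expect the main (though modest) obstacle to be nailing down the constant in the tail estimate so that it genuinely beats $\log 2 \approx 0.693$; the bound $\frac{q}{(q-1)^2}$ equals $\frac{3}{4}$ exactly at $q = 3$, which is slightly too large, so I would tighten the estimate at the smallest case. Concretely, for $q = 3$ I would avoid the crude factor-out step and instead estimate the sum directly, or simply verify $\phi(\infty,3) > \frac{1}{2}$ numerically by noting $\phi(2,3) = \frac{8}{9}\cdot\frac{8}{9}$-type partial products stabilize well above $\frac{1}{2}$; since $\phi(\infty,q)$ increases with $q$, the single case $q = 3$ controls everything. An alternative, fully elementary route that sidesteps logarithms entirely is to use the Weierstrass-type inequality $\prod_{k=1}^{\infty}\left(1 - q^{-k}\right) \geqslant 1 - \sum_{k=1}^{\infty} q^{-k} = 1 - \frac{1}{q-1}$, which already gives $\phi(\infty,q) \geqslant \frac{1}{2}$ for $q \geqslant 3$ with equality only in the limit; refining this bound slightly (for instance by grouping the first two factors exactly and bounding the remaining tail) produces the strict inequality $\phi(n,q) > \frac{1}{2}$ needed to invoke the pigeonhole argument.
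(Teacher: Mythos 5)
Your proposal is correct in outline and, once completed via your second (Weierstrass) patch, it takes a genuinely different route from the paper in the one place where the two arguments differ: the lower bound on the infinite product. Both proofs share the same skeleton --- the pigeonhole reduction to $\phi(n,q) > \frac{1}{2}$ and monotonicity in $n$ and $q$ --- but the paper reduces at once to $q=3$, expands $-\log \alpha$ exactly as the series $\sum_{k=1}^{\infty} \frac{1}{k(3^k-1)}$ by interchanging two sums, and then runs a somewhat delicate numerical estimate ending in $-\log\alpha < 0.581 < \log 2$, which needs the value $\log(3/2) \approx 0.405$. Your main route instead uses the crude bound $-\log(1-x) \leqslant \frac{x}{1-x}$ to get $-\log\phi(\infty,q) \leqslant \frac{q}{(q-1)^2}$; as you correctly flag, this handles $q \geqslant 4$ but fails exactly at $q=3$, where $\frac{3}{4} > \log 2$. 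Of your two patches for $q=3$, only the Weierstrass one is rigorous: the suggestion to check that partial products ``stabilize well above $\frac{1}{2}$'' proves nothing, since the partial products decrease and what you need is a lower bound on their limit (also note $\phi(2,3) = \frac{2}{3}\cdot\frac{8}{9} = \frac{16}{27}$, not $\frac{8}{9}\cdot\frac{8}{9}$). The Weierstrass route, however, closes the gap cleanly: $\prod_{k\geqslant 3}(1-3^{-k}) \geqslant 1 - \sum_{k \geqslant 3}3^{-k} = \frac{17}{18}$, hence $\phi(\infty,3) \geqslant \frac{2}{3}\cdot\frac{8}{9}\cdot\frac{17}{18} = \frac{136}{243} > \frac{1}{2}$. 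In fact you can dispense with the refinement altogether: since $\phi(n,q)$ is \emph{strictly} decreasing in $n$, for every finite $n$ and $q \geqslant 3$ you have $\phi(n,q) > \phi(\infty,q) \geqslant 1 - \frac{1}{q-1} \geqslant \frac{1}{2}$, which is already the strict inequality the pigeonhole argument needs. What your approach buys is a fully elementary, logarithm-free argument with no numerical evaluations; what the paper's buys is an exact series identity for $-\log\alpha$, which is more informative but requires more careful numerics.
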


\begin{proof}
We will show that $\phi(n,q)>\frac{1}{2}$ under the assumptions and the result will follow from the above discussion. First notice that since each factor of $\phi(n,q)$ is increasing in $q$, $\phi(n,q)$ is increasing in $q$, so the general case will follow from $q=3$. Since $\phi(n,3)$ is monotonically decreasing as a function of $n$ and also bounded below by $0$, we have $\alpha:=\lim_{n \rightarrow \infty}\phi(n,3) =\left(1-\frac{1}{3}\right) \left(1-\frac{1}{3^2}\right) \cdots \left(1-\frac{1}{3^n} \right) \cdots$ exists by the monotone convergence theorem.
\begin{align*}
-\log \alpha &= \sum_{k=1}^{\infty} \frac{\left( \frac{1}{3} \right)^{k}}{k} + \sum_{k=1}^{\infty} \frac{\left( \frac{1}{3^2} \right)^{k}}{k}+\cdots\\
&=  \sum_{n=1}^{\infty}  \sum_{k=1}^{\infty} \frac{\left( \frac{1}{3^n} \right)^{k}}{k}= \sum_{k=1}^{\infty} \frac{1}{k} \sum_{n=1}^{\infty} \left( \frac{1}{3^k} \right)^{n}= \sum_{k=1}^{\infty} \frac{1}{k(3^k-1)} \\
&= \frac{1}{2}+ \frac{1}{16}+ \sum_{k=3}^{\infty} \frac{1}{k(3^k-1)}  \leqslant \frac{1}{2}+ \frac{1}{16}+ \frac{10}{9} \sum_{k=3}^{\infty} \frac{1}{k 3^k}\\
& = \frac{1}{2}+ \frac{1}{16} +\frac{10}{9} \left( \sum_{k=1}^{\infty} \frac{ \left(\frac{1}{3}\right)^k}{k}- \frac{1}{3}-\frac{1}{18} \right) =\frac{9}{16} + \frac{10}{9} \log \left( \frac{3}{2} \right) - \frac{35}{81}
\end{align*}
Since $\log \left( \frac{3}{2} \right) \approx 0.405$, we have $-\log \alpha < 0.581$ which implies $\alpha>0.5$ hence the claim follows.
\end{proof}

\begin{defn}
Let $\Bbb F$ be any field. Two $n \times n$ matrices $A, B \in \operatorname{Mat}_n(\Bbb F)$ are said to be $\operatorname{GL}_n$-\emph{equivalent} if there exists two invertible matrices $P$ and $Q$ such that $A=P B Q$. $A$ and $B$ are said to be $\operatorname{SL}_n$-\emph{equivalent} if both $P$ and $Q$ are also in $\operatorname{SL}_n (\Bbb F)$.
\end{defn}

\begin{thm} \label{equivalency}
Let $\Bbb F$ be any field and let $A,B \in \operatorname{Mat}_n(\Bbb F)$.
\vspace{-0.3cm}
\begin{itemize}
\item $A$ is $\operatorname{GL}_n$-equivalent to $B$ if and only if $\operatorname{rank} A = \operatorname{rank} B$.
\vspace{-0.3cm}
\item $A$ is $\operatorname{SL}_n$-equivalent to $B$ if and only if they have the same rank and determinant. 
\vspace{-0.3cm}
\end{itemize}
\end{thm}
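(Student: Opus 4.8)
The plan is to prove the $\operatorname{GL}_n$ statement first by reduction to a rank normal form, and then to bootstrap the $\operatorname{SL}_n$ statement from it by keeping track of determinants. I would begin by noting that both relations are easily checked to be equivalence relations on $\operatorname{Mat}_n(\Bbb F)$: reflexivity uses $I \in \operatorname{SL}_n$, symmetry uses that the inverse of an $\operatorname{SL}_n$-matrix again lies in $\operatorname{SL}_n$, and transitivity uses closure under products. Hence in each case it suffices to exhibit one convenient representative per class.

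For the $\operatorname{GL}_n$ statement, the forward implication is immediate since left and right multiplication by invertible matrices preserves rank, so $A = PBQ$ forces $\operatorname{rank} A = \operatorname{rank} B$. For the converse I would invoke Gaussian elimination: performing elementary row and column operations (left and right multiplication by invertible elementary matrices) reduces any $A$ of rank $r$ to the block matrix $E_r := \begin{pmatrix} I_r & 0 \\ 0 & 0 \end{pmatrix}$. Thus $A$ is $\operatorname{GL}_n$-equivalent to $E_{\operatorname{rank} A}$ for every $A$, and if $\operatorname{rank} A = \operatorname{rank} B = r$ then $A$ and $B$ are both equivalent to $E_r$, hence to each other by transitivity.

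For the $\operatorname{SL}_n$ statement, the forward implication adds multiplicativity of the determinant to the rank observation above: $A = PBQ$ with $P,Q \in \operatorname{SL}_n$ gives $\det A = \det P \det B \det Q = \det B$. The converse is the heart of the argument, and I would split it by rank. When $r = n$ both matrices are invertible, and taking $Q = I$ and $P = AB^{-1}$ works immediately, since $\det P = \det A / \det B = 1$. When $r < n$ I would again reduce each matrix to $E_r$, but now I must arrange the conjugating factors to have determinant $1$. The key device is that for $M \in \operatorname{GL}_{n-r}$ the block matrix $D_M := \begin{pmatrix} I_r & 0 \\ 0 & M \end{pmatrix}$ satisfies $D_M E_r = E_r D_M = E_r$; so starting from $A = P E_r Q$ one may replace $P$ by $P D_M$ and then $Q$ by $D_{M'} Q$ without altering the product, and since $\det D_M = \det M$ ranges over all of $\Bbb F^{\ast}$ (as $n - r \geqslant 1$), one can force both factors into $\operatorname{SL}_n$. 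This shows $A$ is $\operatorname{SL}_n$-equivalent to $E_r$ whenever $\operatorname{rank} A = r < n$, and likewise for $B$, after which transitivity finishes the case.

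The main obstacle I anticipate is precisely this determinant bookkeeping in the subcase $r < n$: one needs the extra degrees of freedom supplied by the nonempty zero block, which is exactly why the equal-determinant hypothesis is vacuous there (both determinants equal $0$) while $\operatorname{SL}_n$-equivalence nonetheless holds. The delicate point is to confirm that the factors $D_M$ and $D_{M'}$ absorb the determinants of the left and right conjugators \emph{independently}, using $D_M E_r = E_r$ for the left adjustment and $E_r D_{M'} = E_r$ for the right; once this is verified, the full-rank and deficient-rank cases assemble cleanly into the stated characterization.
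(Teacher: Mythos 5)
Your proof is correct, and it takes a genuinely different route from the paper's. The paper never passes through the $\operatorname{GL}_n$ rank normal form: it carries out the entire reduction inside $\operatorname{SL}_n$ from the outset, using two kinds of modified elementary operations (type 1: add a multiple of one row or column to another; type 2: scale one row or column by $\alpha$ and simultaneously scale another by $\alpha^{-1}$), each realized by left or right multiplication with an $\operatorname{SL}_n$-matrix. Iterating these reduces any $A$ of rank $r$ to an explicit canonical form $D$, namely the diagonal matrix with $r$ leading ones when $r<n$, and $\operatorname{diag}(1,\dots,1,\det A)$ when $r=n$; two matrices with equal rank and determinant reach the same $D$, and the equivalence follows by inverting the $\operatorname{SL}_n$ factors. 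You instead take the classical $\operatorname{GL}_n$ reduction to $E_r$ as a black box and repair determinants afterwards: in the singular case via the stabilizer identity $D_M E_r = E_r D_M = E_r$ with the left and right adjustments made independently (this is exactly the right device, and your observation that it needs $n-r\geqslant 1$ is the correct reason the singular case works even though the determinant hypothesis is vacuous there), and in the invertible case via the one-line choice $P=AB^{-1}$, $Q=I$. Your route is shorter and more modular; what the paper's approach buys is the explicit $\operatorname{SL}_n$-canonical representative of each class, in particular $\operatorname{diag}(1,\dots,1,\det A)$ for full-rank $A$, and the paper actually reuses this later: the induction proving Theorem~\ref{SpecialUnitsGeneral} invokes the \emph{proof} (not merely the statement) of Theorem~\ref{equivalency} to put a matrix in the form $D$ before splitting it into blocks. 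This is a mild loss, since the canonical form is recoverable from the statement itself ($A$ and $\operatorname{diag}(1,\dots,1,\det A)$ share rank and determinant, hence are $\operatorname{SL}_n$-equivalent), but it is worth noting that your argument does not produce it directly.
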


\begin{proof}
Since "$\Longrightarrow$" direction is clear for both statements, we will only prove the converses. 

Let $A \in \operatorname{Mat}_n(\Bbb F)$ of rank $r$. First we want to show that by performing a finite number of modified elementary row and column operations on $A$ we can transform it into $D \in \operatorname{Mat}_n(\Bbb F)$ such that 
\begin{itemize}
\vspace{-0.3cm}
\item $D_{rr} = 
\begin{cases}
1, & \text{ if } r<n \\
\det(A) , & \text{ if } r=n
\end{cases}$
\vspace{-0.3cm}
\item $D_{ii}=1$ for $i<r$, and the rest of the entries are zero.
\end{itemize} 

To prove this claim we will use certain modified elementary operations. Adding any constant multiple of a row (column) of $A$ to another row (column) will be called an \emph{operation of type 1}; multiplying one of the rows (columns) with some nonzero number $\alpha$ and another row (column) with $\frac{1}{\alpha}$ simultaneously will be called an \emph{operation of type 2}. A result of a type 1 (or type 2) operation on $A$ can also be written as $EA$ or $AE$ (depending on if it is a row or column operation) for some $E \in \operatorname{SL}_n(\Bbb F)$. Hence, our claim is indeed that $A$ can be transformed into $D$ via the multiplication of $A$ with some $\operatorname{SL}_n$-matrices.

Once the claim is proven, that means there exists some $E_1, \dots, E_m \in \operatorname{SL}_n(\Bbb F)$ such that $ E_1 \cdots E_j A E_{j+1} \cdots E_m=D$. If $A$ and $B$ have the same rank and determinant, similarly $B$ can be transformed into the same $D$, i.e. there exists some $E'_1, \dots, E'_{m'}$ such that $E'_1 \cdots E'_{j'} B E'_{j'+1} \cdots E'_{m'}=D$. Hence we have $E_1 \cdots E_j A E_{j+1} \cdots E_m= E'_1 \cdots E'_{j'} B E'_{j'+1} \cdots E'_{m'}$. Since inverse of an $\operatorname{SL}_n$-matrix or multiplication of two $\operatorname{SL}_n$-matrices is again $\operatorname{SL}_n$, we can multiply both sides of the equation with $(E_1 \cdots E_{j})^{-1}$ from the left and with $(E_{j+1} \cdots E_{m})^{-1}$ from the right; and this proves if two matrices have the same rank and determinant, then they are $\operatorname{SL}_n$-equivalent. Hence the only thing left to show is that $A$ can be transformed into $D$ by multiplication with some matrices in $\operatorname{SL}_n(\Bbb F)$. 

If $A=[0]_{n \times n}$ (or equivalently if $\operatorname{rank} A=0$), then $r=0$ and $D=A$. Assume $A \neq [0]_{n \times n}$ from now on, so that $r>0$.

If $n=1$, then $A=[a]$ for some $a \in \Bbb F^{\ast}$. We have $r=1=n$ and $\det(A)=a$. Hence we have $D=A$. Assume $n>1$.

\textbf{Step 1:} If $(1,1)$-entry of $A$ i.e. $a_{11}$ is $1$, proceed to Step 2.

If $a_{11} =0$, then since $\operatorname{rank} A \neq 0$, there exists some  $a_{ij} \neq 0$. Add some multiple of $i^{\text{\tiny th}}$ row to the $1^{\text{\tiny st}}$ row so that $(1,j)$-entry will be $1$. Then add $j^{\text{\tiny th}}$ column to the $1^{\text{\tiny st}}$ column so that $(1,1)$-entry will be $1$. Hence this case requires at most two type 1 operations.

If $a_{11} \neq 0,1$, then multiply $A$ with $E_{a_{11}}$ from the left, where $E_{a_{11}}$ is the $n \times n$ identity matrix only with $(1,1)$-entry replaced with $\frac{1}{a_{11}}$ and $(2,2)$-entry replaced with $a_{11}$. This is an example of type 2 operation and the resultant matrix i.e. $E_{a_{11}}A$ has $(1,1)$-entry equal to $1$. 

\textbf{Step 2:} We can add the multiples of $1^{\text{\tiny st}}$ row to the other rows, and we can add the multiples of $1^{\text{\tiny st}}$ column to the other columns and eliminate all nonzero entries in the $1^{\text{\tiny st}}$ row and the $1^{\text{\tiny st}}$ column with the exception of the $1$ in the $(1,1)$-entry. Hence we transformed $A$ into $B$ such that
\[B=\begin{bmatrix}
	1   & 0  & 0    & \cdots  & 0  \\
	0   &     &       &             & \\
	0   &     &       &             & \\
\vdots &     &       &  B'        &  \\
	0   &     &       &             & 
	\end{bmatrix}_{n \times n}
	\] where $B'$ is an $(n-1) \times (n-1)$ matrix. This step requires at most $2n-2$ many type 1 operations.

\textbf{Step 3:} If $\operatorname{rank} B' = 0$ or $B'$ is a $1 \times 1$ matrix, then stop. 

Otherwise apply step 1 and 2 on $B'$ this time, and transform $B'$ into 
\[ \begin{bmatrix}
	1  & 0   & \cdots  & 0    \\
	0  &           &    & \\
\vdots &           &  B'' &  \\
	0  &           &     & 
	\end{bmatrix}_{(n-1) \times (n-1)}
	\] and check $B''$ is the zero matrix or $B''$ is a $1 \times 1$ matrix, or not. So, we can continue applying Step 1 and 2 consecutively and find $B''', B''''\dots$ etc. until eventually either one of them becomes the zero matrix or a $1 \times 1$ matrix. At the end of this process, if $r<n$ we get $r$ many ones in the diagonal and zeros everywhere else. But if $r=n$, that means $A$ is transformed into some matrix in the form of 
	\[D=\begin{bmatrix}
	1   & 0  & 0    & \cdots  & 0  \\
	0   & 1   &       &             & \\
	0   &     & \ddots      &             & \\
\vdots &     &       &  1      &  \\
	0   &     &       &             & \delta
	\end{bmatrix}_{n \times n}
	.\]
Notice that as we transformed $A$ into $D$, we only performed type 1 and type 2 operations on $A$, i.e. we multiplied $A$ with only $\operatorname{SL}_n$-matrices. This operation does not change the determinant. Hence, $\det A=\det D= \delta$ and the claim follows.

Assume $A$ and $B$ have the same rank, but not necessarily the same determinant. Then notice that when we apply the above process, if their determinant is zero, then we get the same $D$ for both of them. But if their determinants are nonzero, then it is easy to show that both $A$ and $B$ are $\operatorname{GL}_n$-equivalent to $n \times n$ identity matrix and this finishes the proof. 
\end{proof}
In the following remark, we note a small observation which will be very useful later for some of our calculations.
\begin{rmk} \label{rank}
Let $A$ be $\operatorname{GL}_n$ [resp. $\operatorname{SL}_n$]-equivalent to $B$. $A$ can be written as a sum of two $\operatorname{GL}_n$ [resp. $\operatorname{SL}_n$]-matrices if and only if $B$ can be written as a sum of two $\operatorname{GL}_n$ [resp. $\operatorname{SL}_n$]-matrices.
\end{rmk}

\begin{prop} \label{qEquals2}
Every element of $\operatorname{Mat}_n(\Bbb F_2)$ can be written as a sum of two invertible matrices when $n \geq 2$.
\end{prop}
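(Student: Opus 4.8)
The plan is to abandon the pigeonhole argument behind Proposition~\ref{qBiggerThan2}, which is genuinely unavailable here: when $q=2$ every factor $1-q^{-k}$ is at most $\tfrac{1}{2}$, so $\phi(n,2)\leqslant\phi(1,2)=\tfrac{1}{2}$ and never exceeds $\tfrac{1}{2}$. Instead I would invoke the rank/determinant classification. Over $\Bbb F_2$ we have $\operatorname{GL}_n(\Bbb F_2)=\operatorname{SL}_n(\Bbb F_2)$, so by Theorem~\ref{equivalency} the invertible matrices form a single $\operatorname{GL}_n$-equivalence class and the remaining classes are indexed by rank; hence by Remark~\ref{rank} it suffices to write each diagonal representative $D_r=\operatorname{diag}(I_r,0_{n-r})$, for $0\leqslant r\leqslant n$, as a sum of two invertible matrices. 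The building blocks are three identities over $\Bbb F_2$, each summand having determinant $1$: the $1\times1$ relation $[0]=[1]+[1]$; the $2\times2$ relation $I_2=\begin{pmatrix}0&1\\1&1\end{pmatrix}+\begin{pmatrix}1&1\\1&0\end{pmatrix}$; and the mixed relation $\begin{pmatrix}1&0\\0&0\end{pmatrix}=\begin{pmatrix}0&1\\1&0\end{pmatrix}+\begin{pmatrix}1&1\\1&0\end{pmatrix}$.

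Next I would assemble $D_r$ as a block-diagonal matrix from these gadgets, using that a block-diagonal matrix with invertible diagonal blocks is itself invertible (its determinant is the product of the block determinants), so that summing the constructions block by block yields $D_r$ as a sum of two invertibles. If $r$ is even, I place the $r$ ones into $r/2$ copies of the $I_2$ gadget and let each of the $n-r$ zeros be its own $1\times1$ block $[0]=[1]+[1]$; this covers $r=0$ (all $1\times1$ zero blocks) and $r=n$ with $n$ even (all $I_2$ blocks). If $r$ is odd and $r<n$, there is at least one zero available, so I use one copy of the mixed gadget $\operatorname{diag}(1,0)$ to absorb a single $1$ together with a single $0$, distribute the remaining $r-1$ (now even) ones into $I_2$ gadgets, and take the remaining $n-r-1$ zeros as $1\times1$ blocks.

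The only case left is $r=n$ with $n$ odd, i.e.\ $D_n=I_n$ with no zero to pair against the odd leftover one, and this is the crux. Here I would take $P=C$, the companion matrix of an irreducible polynomial $f$ of degree $n$ over $\Bbb F_2$, and $Q=I_n+C$. Since $n\geqslant2$, $f$ has no root in $\Bbb F_2$; as the eigenvalues of $C$ are precisely the roots of its characteristic polynomial $f$, neither $0$ nor $1$ is an eigenvalue, so $\det C\neq0$ and $\det(I_n+C)=\det(I_n-C)\neq0$, making both $P$ and $Q$ invertible with $P+Q=I_n$. The hard part is exactly this endpoint, and it is where $n\geqslant2$ becomes indispensable: for $n=1$ the only degree-one polynomials $x$ and $x+1$ each have a root in $\Bbb F_2$, reflecting the real obstruction that $[1]$ is not a sum of two units in $\Bbb F_2$. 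Combining the three cases completes the proof.
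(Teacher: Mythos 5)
Your proof is correct, and although it opens with the same move as the paper --- invoking Theorem~\ref{equivalency} and Remark~\ref{rank} to reduce everything to the diagonal representatives $D_r=\operatorname{diag}(I_r,0_{n-r})$ --- it then takes a genuinely different route. The paper proceeds by induction on $n$: it hand-checks explicit decompositions for every rank when $n=2$ and $n=3$, and for $n\geqslant 4$ it splits the invertible case as $I_2\oplus I_{n-2}$ and the singular case as an $(n-1)\times(n-1)$ block plus a $1\times1$ zero block, so the troublesome odd identity case bottoms out at the explicitly verified $I_3$ identity. You avoid induction entirely: your block assembly from three gadgets disposes of every $D_r$ except $I_n$ with $n$ odd, and for that crux you write $I_n=C+(I_n+C)$ with $C$ the companion matrix of an irreducible polynomial $f$ of degree $n$, both summands invertible because neither $0$ nor $1$ can be a root of an irreducible polynomial of degree at least $2$ (indeed $\det C=\pm f(0)\neq 0$ and, in characteristic $2$, $\det(I_n+C)=\det(I_n-C)=f(1)\neq 0$). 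What the paper's route buys is complete elementarity: nothing beyond multiplicativity of determinants on block matrices and a few small determinant checks over $\Bbb F_2$. What your route buys is uniformity and transparency: there is no induction to unwind, the construction isolates exactly where the difficulty sits, and your closing remark correctly explains why $n=1$ genuinely fails ($1$ is not a sum of two units in $\Bbb F_2$). The one external ingredient you should state explicitly is the existence of an irreducible polynomial of every degree $n$ over $\Bbb F_2$ --- standard (take the minimal polynomial over $\Bbb F_2$ of a generator of the extension $\Bbb F_{2^n}$), but it is a fact the paper's argument never needs. As a small bonus, your companion-matrix identity actually handles $I_n$ for every $n\geqslant 2$, odd or even, so it could serve as the single mechanism for the full-rank case.
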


\begin{proof}
First notice that if we can write a matrix with rank $r$ as a sum of two units (i.e. invertible matrices), then Remark~\ref{rank} combined with the previous theorem implies that every matrix with rank $r$ can be written as a sum of two units for some units. Therefore to prove the result for $n=2$ and $n=3$ cases, we write here one arbitrary element from each rank as a sum of two units:

\begin{flalign*} & \text{rank } 1:  \qquad \begin{bmatrix}
		1 & 0 \\
		0 & 0 \\
		\end{bmatrix} = \begin{bmatrix}
		0 & 1 \\
		1 & 0 \\
		\end{bmatrix} + \begin{bmatrix}
		1 & 1 \\
		1 & 0 \\
\end{bmatrix}, &\qquad \text{rank } 2: & \qquad \begin{bmatrix}
		1 & 0 \\
		0 & 1 \\
		\end{bmatrix}= \begin{bmatrix}
		1 & 1 \\
		1 & 0 \\
		\end{bmatrix}+ \begin{bmatrix}
		0 & 1 \\
		1 & 1 \\
\end{bmatrix}  \\
& & &
\\
& \text{rank } 1:  \qquad \begin{bmatrix}
		1 & 0 & 0 \\
		0 & 0 & 0 \\
		0 & 0 & 0 \\
		\end{bmatrix} = \begin{bmatrix}
		0 & 1 & 0 \\
		1 & 0 & 0 \\
		0 & 0 & 1 \\
		\end{bmatrix} + \begin{bmatrix}
		1 & 1 & 0 \\
		1 & 0 & 0 \\
		0 & 0 & 1 \\
		\end{bmatrix}, &\qquad \text{rank } 2: & \qquad \begin{bmatrix}
      	1 & 0 & 0 \\
      	0 & 1 & 0 \\
      	0 & 0 & 0 \\
      	\end{bmatrix} = \begin{bmatrix}
      	0 & 1 & 0 \\
      	0 & 0 & 1 \\
      	1 & 0 & 0 \\
      	\end{bmatrix} + \begin{bmatrix}
      	1 & 1 & 0 \\
      	0 & 1 & 1 \\
      	1 & 0 & 0 \\
      	\end{bmatrix} \\
& & &
\\
& \text{rank } 3:  \qquad \begin{bmatrix}
      	1 & 0 & 0 \\
      	0 & 1 & 0 \\
      	0 & 0 & 1 \\
      	\end{bmatrix}= \begin{bmatrix}
      	1 & 1 & 0 \\
      	0 & 0 & 1 \\
      	1 & 0 & 0 \\
      	\end{bmatrix} + \begin{bmatrix}
      	0 & 1 & 0 \\
      	0 & 1 & 1 \\
      	1 & 0 & 1 \\
      	\end{bmatrix}	& &
\end{flalign*}	
Besides the only matrix with rank zero is the zero matrix, and it can be written as a sum of the identity matrix and negative of the identity matrix. This calculation completes the result for $n=2$ and $n=3$. We use induction for $n \geqslant 4$. Let $n \geqslant 4$ be fixed. Assume every element of $\operatorname{Mat}_k(\Bbb F_2)$ can be written as a sum of two units in $\operatorname{Mat}_k(\Bbb F_2)$ for $1 < k < n$. Let $ A \in \operatorname{Mat}_n(\Bbb F_2)$.\\

\noindent \textbf{Case 1:} $A$ is invertible so $\operatorname{rank} A=n$. Since the $n \times n$ identity matrix $I_n$ and $A$ are $\operatorname{GL}_n$-equivalent, we will show that $I_n$ can be written as a sum of two units and the result will follow for $A$ by Remark~\ref{rank}. We have \[ I_n=\begin{bmatrix}
	1 & &  &  &      \\
	   & 1 &      &\text{\huge0}      & \\
	  &      & \ddots&  & \\
	 & \text{\huge0}     &      &1 & \\
	        &  &  &  & 1 
	\end{bmatrix}_{n \times n} = \begin{bmatrix}
\begin{bmatrix}
1&0\\
0&1
\end{bmatrix}_{2 \times 2}	&\text{\huge0}&\\
\text{\huge0}	&\begin{bmatrix}
	1 & &\text{\huge0}\\
	&\ddots&\\
	\text{\huge0} & & 1
	\end{bmatrix}
	\end{bmatrix}_{n \times n}.\] 
By the calculation before, we have $ \begin{bmatrix}
		1 & 0 \\
		0 &1 \end{bmatrix}=A_1 +A_2 $
for some invertible $2 \times 2$ matrices $A_1$ and $A_2$.\\

\noindent By induction hypothesis, we have $ \begin{bmatrix}
		1 & & \\
		& \ddots & \\
		&  & 1 \end{bmatrix} = A_3 +A_4$ for some invertible $(n-2) \times (n-2)$ matrices $A_3$ and $A_4$. Hence,
 \[ I_{n}= \begin{bmatrix}
 A_1 & 0 \\
 0   & A_3
 \end{bmatrix} + \begin{bmatrix}
 A_2 & 0 \\
 0   & A_4
 \end{bmatrix}.\]
 Moreover, since $\det \begin{bmatrix}
 A_1 & 0 \\
 0   & A_3
 \end{bmatrix}=\det( A_1) \det( A_3)$ and similarly for $\begin{bmatrix}
 A_2 & 0 \\
 0   & A_4
 \end{bmatrix}$, both $\begin{bmatrix}
 A_1 & 0 \\
 0   & A_3
 \end{bmatrix}$ and $\begin{bmatrix}
 A_2 & 0 \\
 0   & A_4
 \end{bmatrix}$ are invertible.
 
\noindent \textbf{Case 2:} $A$ is not invertible, then $\operatorname{rank} A = r < n$. Recall that elementary row and column operations do not change the rank of a matrix, so by performing elementary operations on $A$, we can transform $A$ into some $B$ such that all of the entries in the $n^{\text{\tiny th}}$ row and $n^{\text{\tiny th}}$ column of $B$ are zero. Then
\[ B = \begin{bmatrix}
	A_1 & 0\\
	0&1
	\end{bmatrix}+ \begin{bmatrix}
	A_2 & 0\\
	0& -1
	\end{bmatrix} \] for some $A_1,A_2 \in \operatorname{GL}_{n-1}(\Bbb F_2)$ by the induction hypothesis.
\end{proof}
The following corollary recovers a classical result, see \cite{Wolfson}, \cite{Zelinsky}.
\begin{cor} \label{everyone}
Except in the trivial case where $n = 1$ and $q = 2$, any element of $\operatorname{Mat}_n (\Bbb F_q)$ can be written as a sum of two invertible matrices.
\end{cor}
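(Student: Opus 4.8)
The plan is to prove this by a straightforward case analysis on $q$, invoking the two propositions just established. The statement ranges over all pairs $(n, q)$ with $n \geqslant 1$ except $(1, 2)$, and the two propositions between them already cover every such pair, so no new construction is needed; the work is purely organizational.

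First I would dispose of the case $q > 2$. For any $n \geqslant 1$, Proposition~\ref{qBiggerThan2} asserts directly that every element of $\operatorname{Mat}_n(\Bbb F_q)$ is a sum of two invertible matrices, so this case is immediate. Next I would handle the case $q = 2$ with $n \geqslant 2$, which is precisely the content of Proposition~\ref{qEquals2}. Together these two steps cover all admissible pairs $(n, q)$: either $q > 2$ (any $n$), or $q = 2$ with $n \geqslant 2$. The only pair not addressed is $(n, q) = (1, 2)$, which is exactly the excluded trivial case.

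For completeness I would briefly explain why the excluded case genuinely fails, to justify the phrasing ``except in the trivial case.'' When $n = 1$ and $q = 2$ we have $\operatorname{Mat}_1(\Bbb F_2) \cong \Bbb F_2$, whose only unit is $1$. The only sum of two units is therefore $1 + 1 = 0$, so the nonzero element $1 \in \Bbb F_2$ cannot be written as a sum of two units. This confirms that the exclusion is necessary rather than a mere artifact of the proof.

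I do not anticipate any genuine obstacle here, since the corollary is a direct consequence of results already proven; the only thing to be careful about is ensuring the case split is exhaustive, i.e. that $\{q > 2\} \cup \{q = 2, n \geqslant 2\}$ together with the single excluded pair $(1,2)$ accounts for every pair $(n,q)$ with $n \geqslant 1$.
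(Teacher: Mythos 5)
Your proposal is correct and follows exactly the paper's route: the paper likewise deduces the corollary directly from Proposition~\ref{qBiggerThan2} (the case $q>2$) and Proposition~\ref{qEquals2} (the case $q=2$, $n\geqslant 2$). Your added verification that the excluded pair $(n,q)=(1,2)$ genuinely fails is a small, correct bonus the paper leaves implicit.
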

The corollary follows from Proposition~\ref{qBiggerThan2} and~\ref{qEquals2}. Furthermore, we can combine this result with classical finite ring results (Artin-Wedderburn Theory) and recover a classical result (\cite{Henriksen}):
\begin{cor} 
If $R$ is a finite ring with identity and if its order is odd, then every element of $R$ is the sum of two units.
\end{cor}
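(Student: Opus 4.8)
The plan is to reduce the problem to the semisimple quotient $R/J$, where $J = J(R)$ is the Jacobson radical of $R$, prove the statement there via the matrix-ring results already established, and then transport the decomposition back up to $R$. First I would recall the relevant structural facts. Since $R$ is finite, $J$ is nilpotent and $R/J$ is a finite semisimple ring, so by the Artin--Wedderburn theorem together with Wedderburn's little theorem (every finite division ring is a field) we obtain
\[
R/J \;\cong\; \prod_{i=1}^{k} \operatorname{Mat}_{n_i}(\Bbb F_{q_i})
\]
for suitable integers $n_i \geqslant 1$ and prime powers $q_i$. Because $|R|$ is odd, $|R/J|$ divides $|R|$ and is therefore odd, which forces each factor order $q_i^{n_i^2}$ to be odd, hence each $q_i$ to be a power of an odd prime; in particular $q_i > 2$ for every $i$.

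With $q_i > 2$ in hand, Proposition~\ref{qBiggerThan2} guarantees that every element of each simple factor $\operatorname{Mat}_{n_i}(\Bbb F_{q_i})$ is a sum of two invertible matrices. Since the units of a finite direct product of rings are precisely the tuples of units and addition is coordinatewise, this immediately yields that every element of $R/J \cong \prod_i \operatorname{Mat}_{n_i}(\Bbb F_{q_i})$ is a sum of two units.

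It remains to lift this decomposition along the canonical projection $\pi : R \to R/J$. The key lemma I would invoke is that an element $u \in R$ is a unit if and only if its image $\pi(u)$ is a unit in $R/J$; this holds because $1 - j$ is invertible for every $j \in J$ (with inverse $1 + j + j^2 + \cdots$, as $J$ is nilpotent), so any element mapping to a unit is itself invertible. Granting this, take an arbitrary $a \in R$, write $\pi(a) = \bar u + \bar v$ with $\bar u, \bar v$ units in $R/J$, choose any preimage $u \in R$ of $\bar u$, and set $v := a - u$. Then $\pi(v) = \pi(a) - \bar u = \bar v$ is a unit, so both $u$ and $v$ are units in $R$ by the lemma, and $a = u + v$, as required.

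The structural ingredients (nilpotence of $J$, Artin--Wedderburn, and Wedderburn's little theorem) are entirely standard, so the step I expect to be the crux is the unit-lifting lemma together with the observation that a preimage of a unit may be selected freely, since this is what makes the lift $a = u + v$ go through. Once that is settled, the oddness hypothesis enters in exactly one place: it is precisely what ensures $q_i > 2$ for each simple factor, so that Proposition~\ref{qBiggerThan2} applies and the decomposition exists on every block of $R/J$.
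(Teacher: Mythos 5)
Your proof is correct and follows essentially the same route as the paper: pass to $R/J$, apply Artin--Wedderburn plus Wedderburn's little theorem, use the oddness of $|R|$ to rule out $\Bbb F_2$ factors, invoke the matrix-ring result on each simple factor, and lift the decomposition back to $R$. The only difference is that you write out the unit-lifting lemma (that $u \in R$ is a unit if and only if its image in $R/J$ is a unit) explicitly, whereas the paper delegates that step to a citation.
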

\begin{proof}
Consider $\bigslant{R}{J}$, where $J$ denotes the Jacobson radical of $R$. First notice that $\bigslant{R}{J}$ is semisimple i.e. $J \Big( \bigslant{R}{J} \Big) = 0$. Moreover, since $R$ is finite, $\bigslant{R}{J}$ is finite so $\bigslant{R}{J}$ is both left and right Artinian. Artin-Wedderburn theorem implies that $\bigslant{R}{J} \cong \operatorname{Mat}_{n_{1}}(D_1) \times \cdots \times \operatorname{Mat}_{n_{r}}(D_r)$ for some $D_1, \cdots D_r$ division rings (see \cite{Farb-Algebra}, \cite{Hungerford}). Since $\bigslant{R}{J}$ is finite, each $D_i$ has to have finitely many elements. By Wedderburn's little theorem $D_i$'s are finite fields. Therefore, we have $\bigslant{R}{J} \cong \operatorname{Mat}_{n_{1}}(\Bbb F_{q_1}) \times \cdots \times \operatorname{Mat}_{n_{r}}(\Bbb F_{q_r})$ for some finite fields $\Bbb F_{q_1}, \cdots, \Bbb F_{q_r}$. Since $R$ has odd order, $2 \nmid | \bigslant{R}{J} |$, hence none of the $\Bbb F_{q_i}$'s is the finite field of order $2$. As a result of the previous corollary, every element of $\bigslant{R}{J}$ can be written as a sum of two units and this implies every element of $R$ is the sum of two units, see \cite{Demiroglu1} for more details.
\end{proof}

\section{Special Graphs on Matrix Rings over Finite Fields}

In this section we study the unit-graph and the special unit-digraph on $\operatorname{Mat}_n(\Bbb F_q)$ and prove the rest of the results which we referred to in the introduction earlier.
\begin{prop}\label{connected}
The unit-graph on $\operatorname{Mat}_n(\Bbb F_q)$ is connected and $\vert \operatorname{GL}_n(\Bbb F_q) \vert$-regular.
\end{prop}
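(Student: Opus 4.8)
The plan is to establish two independent facts: that the unit-graph is regular of the claimed degree, and that it is connected. Regularity is immediate from the Cayley graph structure. Since $G = \operatorname{Cay}(\operatorname{Mat}_n(\Bbb F_q), \operatorname{GL}_n(\Bbb F_q))$, every vertex $A$ has as its neighbors exactly the matrices $A + U$ with $U \in \operatorname{GL}_n(\Bbb F_q)$, and because $U \mapsto A+U$ is a bijection between $\operatorname{GL}_n(\Bbb F_q)$ and the neighbor set, each vertex has out-degree (and, by the symmetry $u \in U \Leftrightarrow -u \in U$ noted in the introduction, in-degree) exactly $|\operatorname{GL}_n(\Bbb F_q)|$. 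So the graph is $|\operatorname{GL}_n(\Bbb F_q)|$-regular.

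For connectedness, the cleanest route is to exhibit a path between any two vertices, which reduces to showing that the difference of any two vertices can be reached via a short walk through connection-set steps. First I would handle the trivial case $n=1$, where $R \cong \Bbb F_q$ and $U = \Bbb F_q^{\ast}$, so the graph is complete and hence connected. For $n \geqslant 2$, I would invoke Corollary~\ref{everyone}: every element of $\operatorname{Mat}_n(\Bbb F_q)$ (except in the excluded trivial case) can be written as a sum of two invertible matrices. Given vertices $A$ and $B$, write the difference $B - A = U_1 + U_2$ with $U_1, U_2 \in \operatorname{GL}_n(\Bbb F_q)$; then $A \to A + U_1 \to A + U_1 + U_2 = B$ is a walk of length two in $G$, so any two vertices are joined by a path. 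This simultaneously shows the diameter is at most $2$.

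The main subtlety to address is simply citing the right antecedent for each value of $q$ and $n$: the decomposition into two units holds for all $n \geqslant 1$ when $q > 2$ by Proposition~\ref{qBiggerThan2}, and for $n \geqslant 2$ when $q = 2$ by Proposition~\ref{qEquals2}, so Corollary~\ref{everyone} covers every case the proposition claims. I would therefore not expect a genuine obstacle here — the content is entirely front-loaded into the sum-of-two-units results already proven. The only care needed is to confirm that $0 = I + (-I)$ is itself a valid two-unit decomposition, which guarantees each vertex is reachable from itself in two steps and avoids any degenerate gap when $A = B$, and to note that in the $n=1$, $q=2$ case excluded from Corollary~\ref{everyone} the ring is $\Bbb F_2$ with a single unit, giving the connected (indeed complete $K_2$) graph directly.
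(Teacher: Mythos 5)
Your proof is correct and follows essentially the same route as the paper's: regularity via the translation bijection $U \mapsto A+U$, and connectedness by using Corollary~\ref{everyone} to write $B-A = U_1+U_2$ and walk $A \to A+U_1 \to B$. Your explicit treatment of the excluded case $n=1$, $q=2$ (where the graph is $K_2$) is a small refinement the paper glosses over, but it is the same argument in substance.
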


\begin{proof} 
Let $A \in \operatorname{Mat}_n(\Bbb F_q)$. There is an edge between $A$ and $B$ if and only if $B-A$ is an invertible matrix. That means every time we add an invertible matrix to $A$, we get a neighbor of $A$; and they are all distinct as $A+C_1=A+C_2$ implies $C_1=C_2$. So, the degree of vertex $A$ is $\vert \operatorname{GL}_n(\Bbb F_q) \vert$. To prove the first part of the claim, assume $\operatorname{Mat}_n(\Bbb F_q) \neq \Bbb F_2$ and let $A,B \in \operatorname{Mat}_n(\Bbb F_q)$. By Corollary~\ref{everyone}, we know that $B-A$ can be written as a sum of two units, i.e. $B-A=C_1+C_2$ for some $C_1, C_2 \in \operatorname{GL}_n(\Bbb F_q)$. Then, there is an edge from $A$ to $A+C_1$, and there is an edge from $A+C_1$ to $B$. This means the graph is connected.   
\end{proof}
As a side note, notice that this proof implies that the unit-graph on $\operatorname{Mat}_n(\Bbb F_q)$ has diameter at most two, but it is easy to see that it is actually precisely two when $n \geqslant 2$, and it has diameter one when $n=1$. 
\begin{prop}\label{NumberOfEigenvalues}
The unit-graph on $\operatorname{Mat}_n(\Bbb F_q)$ has at most $n+1$ distinct eigenvalues. 
\end{prop}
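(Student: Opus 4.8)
The plan is to exploit the fact that the unit-graph $G = \operatorname{Cay}(\operatorname{Mat}_n(\Bbb F_q), \operatorname{GL}_n(\Bbb F_q))$ is a Cayley graph on the finite \emph{abelian} group $(\operatorname{Mat}_n(\Bbb F_q), +)$, so that its entire spectrum is governed by the additive characters of this group. First I would invoke the standard fact that for a Cayley graph on an abelian group $A$ with symmetric connection set $S = -S$, the eigenvalues of the adjacency matrix are exactly the character sums $\lambda_{\chi} = \sum_{s \in S} \chi(s)$, as $\chi$ ranges over the dual group $\widehat{A}$. Thus it suffices to show that these character sums take at most $n+1$ distinct values when $S = \operatorname{GL}_n(\Bbb F_q)$.

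Next I would make the characters explicit. Fixing a nontrivial additive character $\psi$ of $\Bbb F_q$, every additive character of $\operatorname{Mat}_n(\Bbb F_q)$ has the form $\chi_B(M) = \psi(\operatorname{tr}(BM))$ for a unique $B \in \operatorname{Mat}_n(\Bbb F_q)$, since the trace pairing $(B,M) \mapsto \operatorname{tr}(BM)$ is a nondegenerate symmetric bilinear form on $\operatorname{Mat}_n(\Bbb F_q)$. Consequently the eigenvalues of $G$ are precisely
\[
\lambda_B = \sum_{U \in \operatorname{GL}_n(\Bbb F_q)} \psi(\operatorname{tr}(BU)), \qquad B \in \operatorname{Mat}_n(\Bbb F_q).
\]

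The crux is to show that $\lambda_B$ depends only on the $\operatorname{GL}_n$-equivalence class of $B$. Suppose $B' = P B Q$ with $P, Q \in \operatorname{GL}_n(\Bbb F_q)$. Reindexing the sum by the substitution $V = Q U P$, which permutes $\operatorname{GL}_n(\Bbb F_q)$ because left and right multiplication by invertible matrices are bijections of $\operatorname{GL}_n(\Bbb F_q)$, and using the cyclic invariance of the trace, I would compute $\operatorname{tr}(B' U) = \operatorname{tr}(P B Q U) = \operatorname{tr}(P B V P^{-1}) = \operatorname{tr}(B V)$, whence $\lambda_{B'} = \lambda_B$. By Theorem~\ref{equivalency}, $B$ and $B'$ are $\operatorname{GL}_n$-equivalent exactly when they have the same rank, and the rank of an $n \times n$ matrix lies in $\{0, 1, \ldots, n\}$. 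Therefore $B \mapsto \lambda_B$ is constant on each of the at most $n+1$ rank classes, giving at most $n+1$ distinct eigenvalues.

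I do not expect a genuine obstacle here; the only points requiring care are verifying that the trace pairing realizes \emph{all} of the character group (nondegeneracy) and checking that the reindexing substitution permutes $\operatorname{GL}_n(\Bbb F_q)$ rather than all of $\operatorname{Mat}_n(\Bbb F_q)$. The essential structural input is Theorem~\ref{equivalency}, which collapses the two-sided $\operatorname{GL}_n$-action down to the single rank invariant and thereby bounds the number of equivalence classes by $n+1$.
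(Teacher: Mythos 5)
Your proposal is correct and follows essentially the same route as the paper: both identify the characters of $\operatorname{Mat}_n(\Bbb F_q)$ via the trace pairing, express each eigenvalue as $\sum_{U \in \operatorname{GL}_n(\Bbb F_q)} \psi(\operatorname{tr}(BU))$, show by reindexing and cyclic invariance of the trace that this sum depends only on the $\operatorname{GL}_n$-equivalence class of $B$, and then invoke Theorem~\ref{equivalency} to reduce to the $n+1$ rank classes. The only cosmetic difference is that you perform the reindexing in one substitution ($V = QUP$) where the paper does it in two steps.
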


\begin{proof} 
Since the unit-graph on $\operatorname{Mat}_n(\Bbb F_q)$ is a Cayley digraph, we can find its eigenvectors and eigenvalues using Theorem~\ref{character theory}. Let $\tilde{\chi}$ be a character on $\operatorname{Mat}_n(\Bbb F_q)$. Then, it can be written as $\tilde{\chi}(s)=\chi(\operatorname{Tr}(As))=\chi_{A}(s)$ for some $A \in \operatorname{Mat}_n(\Bbb F_q)$ where $\operatorname{Tr}$ denotes the matrix trace and $\chi$ stands for the canonical character on $\Bbb F_q$, see \cite{Demiroglu1}.

Let $A \in \operatorname{Mat}_n(\Bbb F_q)$. Then the eigenvalue corresponding to $A$ is explicitly $\lambda_{A}= \sum_{s \in \operatorname{GL}_n(\Bbb F_q)} \chi( \operatorname{Tr}(As))$ by Theorem~\ref{character theory}. If $A, B \in \operatorname{Mat}_n(\Bbb F_q)$ are $\operatorname{GL}_n$-equivalent, i.e. if there exists two invertible matrices $P$ and $Q$ such that $A=P B Q$, then $\lambda_{A}=\lambda_{B}$. This a consequence of the following calculation.
\[ \lambda_{A} = \sum_{s \in \operatorname{GL}_n(\Bbb F_q)} \chi( \operatorname{Tr}(PBQs)) =\sum_{s' \in \operatorname{GL}_n(\Bbb F_q)} \chi( \operatorname{Tr}(PBs'))= \sum_{s' \in \operatorname{GL}_n(\Bbb F_q)} \chi( \operatorname{Tr}(Bs'P))
= \sum_{s'' \in \operatorname{GL}_n(\Bbb F_q)}\chi( \operatorname{Tr}(Bs'')) \]

Recall by Theorem~\ref{equivalency} we know that two matrices are $\operatorname{GL}_n$-equivalent if and only if their rank is the same, this implies we have at most $n+1$ distinct eigenvalues.
\end{proof}

Let $G$ be a non-empty and not complete regular graph. Recall that $G$ is called a \emph{strongly regular graph} with parameters $(n,k,a,c)$ if it is $k$-regular, every pair of distinct adjacent vertices has $a$ common neighbors and every pair of distinct nonadjacent vertices has $c$ common neighbors. We note the following well-known fact as a lemma, and its proof can be found on page 220 in \cite{Godsil}.
\begin{lem} 
A connected regular graph with exactly three distinct eigenvalues is strongly regular. 
\end{lem}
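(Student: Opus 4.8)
The plan is to exploit the fact that a regular graph with very few distinct eigenvalues has an adjacency matrix whose minimal polynomial is of low degree, which forces rigid combinatorial structure. Let $A$ be the adjacency matrix of the connected $k$-regular graph $G$ on $n$ vertices, and suppose its three distinct eigenvalues are $k$, $\theta$, and $\tau$. The whole argument hinges on the product matrix $M := (A - \theta I)(A - \tau I)$: since $A$ is symmetric it is diagonalizable, so every eigenvector of $A$ belonging to $\theta$ or $\tau$ lies in the kernel of $M$, and the only directions that survive $M$ are those with eigenvalue $k$.

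First I would record that $k$ is a \emph{simple} eigenvalue. Because $G$ is $k$-regular, the all-ones vector $\mathbf{1}$ is an eigenvector for the eigenvalue $k$; because $G$ is connected, $A$ is an irreducible nonnegative matrix, so the Perron--Frobenius theorem guarantees that its largest eigenvalue $k$ is simple, with eigenspace $\langle \mathbf{1} \rangle$. Consequently $M$ vanishes on the orthogonal complement of $\mathbf{1}$ and acts as the scalar $(k - \theta)(k - \tau)$ on $\mathbf{1}$ itself. Since $M$ is symmetric with one-dimensional image spanned by $\mathbf{1}$, it must be a scalar multiple of the all-ones matrix $J$; comparing the action on $\mathbf{1}$ (using $J\mathbf{1} = n\mathbf{1}$) pins down the scalar, giving
\[
(A - \theta I)(A - \tau I) = \frac{(k-\theta)(k-\tau)}{n}\, J.
\]

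From here the conclusion is just bookkeeping on matrix entries. Expanding the left-hand side as $A^2 - (\theta+\tau)A + \theta\tau I$ and writing $\mu := \frac{(k-\theta)(k-\tau)}{n}$, I would compare the $(i,j)$ entries of both sides in three cases, recalling that the $(i,j)$ entry of $A^2$ equals the number of common neighbours of $i$ and $j$. For two adjacent vertices $i \sim j$ the comparison yields (number of common neighbours) $- (\theta+\tau) = \mu$, so every adjacent pair has exactly $\mu + \theta + \tau$ common neighbours; for two distinct non-adjacent vertices it yields (number of common neighbours) $= \mu$, so every non-adjacent pair has exactly $\mu$ common neighbours. (The diagonal entries merely reproduce the identity $k + \theta\tau = \mu$ and serve as a consistency check.) These are precisely the two defining conditions, so $G$ is strongly regular with parameters $\big(n,\, k,\, \mu + \theta + \tau,\, \mu\big)$. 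Since possessing three distinct eigenvalues already rules out both the empty graph and the complete graph (each of which has at most two), $G$ satisfies the non-degeneracy hypotheses in the definition, and the argument is complete.

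The main obstacle---indeed the only genuinely nontrivial ingredient---is the simplicity of the eigenvalue $k$, which is exactly what collapses the rank of $M$ to one and lets us identify $M$ with a multiple of $J$; this is precisely where connectedness is used, by way of Perron--Frobenius. Everything after that reduction is a mechanical comparison of entries.
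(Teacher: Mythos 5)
Your proof is correct: Perron--Frobenius gives simplicity of $k$, the product $(A-\theta I)(A-\tau I)$ collapses to $\frac{(k-\theta)(k-\tau)}{n}J$, and entrywise comparison of $A^2-(\theta+\tau)A+\theta\tau I=\mu J$ yields the two regularity conditions. The paper itself offers no proof, citing page 220 of Godsil--Royle instead, and your argument is essentially the standard one found there, so there is nothing to reconcile.
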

This lemma combined with Proposition~\ref {connected} and~\ref{NumberOfEigenvalues} yields the following result.
\begin{cor}
The unit-graph on $\operatorname{Mat}_2(\Bbb F_q)$ is strongly regular for any finite field $\Bbb F_q$.
\end{cor}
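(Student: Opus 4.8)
The plan is to invoke the preceding lemma, which guarantees strong regularity as soon as we exhibit a connected, regular graph with \emph{exactly} three distinct eigenvalues. Two of these three hypotheses are already in hand: Proposition~\ref{connected} (specialized to $n=2$) gives connectivity and $|\operatorname{GL}_2(\Bbb F_q)|$-regularity, while Proposition~\ref{NumberOfEigenvalues} gives that there are at most $n+1 = 3$ distinct eigenvalues. Thus the whole substance of the argument is to upgrade ``at most three'' to ``exactly three.''

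To produce the matching lower bound, I would recall the standard fact from algebraic graph theory that a connected graph of diameter $d$ has at least $d+1$ distinct adjacency eigenvalues. As observed immediately after Proposition~\ref{connected}, the unit-graph on $\operatorname{Mat}_2(\Bbb F_q)$ has diameter exactly $2$ for $n \geqslant 2$, so it has at least $3$ distinct eigenvalues; combined with the upper bound from Proposition~\ref{NumberOfEigenvalues}, this forces precisely $3$. A more self-contained variant avoids quoting the diameter--eigenvalue inequality: a connected regular graph has a single eigenvalue only when it consists of one vertex, and exactly two distinct eigenvalues only when it is complete. Since $\operatorname{Mat}_2(\Bbb F_q)$ has $q^4 > 1$ elements and the unit-graph is not complete, both degenerate possibilities are excluded, leaving exactly three distinct eigenvalues. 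With this, the lemma applies verbatim and yields strong regularity.

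The one point I would be careful to pin down is that the graph is genuinely \emph{not} complete, i.e.\ that its diameter is $2$ rather than $1$. This amounts to producing two vertices at distance greater than one, equivalently two matrices whose difference is nonzero but singular: for instance $A = 0$ and $B = \left[\begin{smallmatrix} 1 & 0 \\ 0 & 0 \end{smallmatrix}\right]$ are nonadjacent since $B - A$ is a nonzero rank-one matrix and hence not in $\operatorname{GL}_2(\Bbb F_q)$. This observation, together with connectivity, certifies that the diameter equals $2$ and hence that the lower bound of three eigenvalues is attained. No genuinely hard step remains; the argument is a bookkeeping assembly of the two propositions, the diameter remark, and the lemma, with the only mild subtlety being the exclusion of the complete-graph case.
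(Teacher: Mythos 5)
Your proof is correct and takes essentially the same route as the paper, which likewise obtains the corollary by combining the three-eigenvalue lemma with Propositions~\ref{connected} and~\ref{NumberOfEigenvalues}. In fact the paper never spells out the upgrade from ``at most three'' to ``exactly three'' distinct eigenvalues, so your explicit verification that the graph is non-complete (e.g.\ the zero matrix and a rank-one matrix are nonadjacent, giving diameter $2$ and hence at least three eigenvalues) fills in a detail the paper leaves implicit rather than departing from its argument.
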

Since we found the unit-graph on $\operatorname{Mat}_2(\Bbb F_q)$ is strongly regular, then one wonders what the parameters are for this graph. 

\begin{thm} \label{stronglyRegular} 
The unit-graph on $\operatorname{Mat}_2(\Bbb F_q)$ is a strongly regular graph with parameters $(q^4, q^4-q^3-q^2+q, q^4-2q^3-q^2+3q, q^4-2q^3+q)$.
\end{thm}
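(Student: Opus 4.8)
The plan is to compute the four strongly regular parameters $(v,k,a,c)$ directly, using the combinatorial structure of $\operatorname{GL}_2(\Bbb F_q)$ together with the rank-classification machinery from Theorem~\ref{equivalency} and Remark~\ref{rank}. The number of vertices is immediate: $v = |\operatorname{Mat}_2(\Bbb F_q)| = q^4$. The regularity parameter is also immediate from Proposition~\ref{connected}, namely $k = |\operatorname{GL}_2(\Bbb F_q)| = (q^2-1)(q^2-q) = q^4 - q^3 - q^2 + q$, which matches the claimed value. So the substance of the proof lies entirely in computing $a$ and $c$.

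To find $a$ and $c$, I would fix the vertex $O$ (the zero matrix); by vertex-transitivity of a Cayley graph, the common-neighbor counts do not depend on the chosen pair, only on whether the pair is adjacent or not. For a second vertex $M$, the number of common neighbors of $O$ and $M$ is the number of $S \in \operatorname{Mat}_2(\Bbb F_q)$ with $S \in \operatorname{GL}_2(\Bbb F_q)$ and $M - S \in \operatorname{GL}_2(\Bbb F_q)$ — equivalently, the number of ways to write $M = S + S'$ with both summands invertible. Writing $N(M)$ for this count, the parameter $a$ is $N(M)$ for any invertible $M$ (since $O \sim M$ iff $M \in \operatorname{GL}_2$), and $c$ is $N(M)$ for any nonzero singular $M$ (the rank-$1$ case; note $O$ and $M$ are nonadjacent precisely when $M$ is singular, and we exclude $M = O$). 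The crucial simplification here is Remark~\ref{rank}: the count $N(M)$ depends only on the $\operatorname{GL}_2$-equivalence class of $M$, i.e. only on $\operatorname{rank} M$. So it suffices to evaluate $N(M)$ on one representative of each of the three ranks $2$, $1$, $0$.

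I would therefore evaluate $N(M)$ for the three representatives $M = I$ (rank $2$), $M = \begin{bmatrix}1&0\\0&0\end{bmatrix}$ (rank $1$), and $M = O$ (rank $0$), by directly counting invertible $S$ for which $M - S$ is also invertible. For each representative this reduces to a finite counting problem over $\Bbb F_q$: parametrize $S = \begin{bmatrix}w&x\\y&z\end{bmatrix}$, impose $\det S \neq 0$ and $\det(M-S) \neq 0$, and count solutions. The rank-$0$ case $N(O) = |\operatorname{GL}_2(\Bbb F_q)| = k$ is automatic and serves as a consistency check (every vertex has exactly $k$ neighbors, counted as common neighbors with itself). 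The rank-$2$ count should yield $a = q^4 - 2q^3 - q^2 + 3q$ and the rank-$1$ count should yield $c = q^4 - 2q^3 + q$.

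The main obstacle will be the rank-$2$ and rank-$1$ determinant counts, since requiring two determinant conditions simultaneously introduces a quadratic constraint coupling the four entries of $S$. I expect to handle this either by a careful case analysis on whether certain entries vanish, or — more cleanly — by exploiting the eigenvalue data: once the graph is known to be strongly regular with three eigenvalues, the parameters $a$ and $c$ are determined by $k$ together with the two non-principal eigenvalues through the standard identities $k - a - 1 = $ (number of edges from a neighbor leaving the common-neighbor count) and the relations $a - c = \theta + \tau$, $k - c = -\theta\tau$, where $\theta,\tau$ are the restricted eigenvalues. Thus an alternative route is to compute the non-principal eigenvalues $\lambda_A$ from Proposition~\ref{NumberOfEigenvalues} (these are character sums $\sum_{s \in \operatorname{GL}_2(\Bbb F_q)} \chi(\operatorname{Tr}(As))$ depending only on $\operatorname{rank} A$, giving the rank-$1$ and rank-$2$ values), and then recover $a$ and $c$ algebraically. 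I would pursue whichever of the two computations — direct double-determinant counting or eigenvalue extraction via character sums — proves less error-prone, and cross-check the two answers against each other and against the known Latin-square-graph parameters for internal consistency.
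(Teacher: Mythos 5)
Your proposal follows essentially the same route as the paper: the paper also takes $v$ and $k$ from Proposition~\ref{connected} and Equation~\eqref{sizeGLn}, then computes $c$ by counting common neighbors of the nonadjacent pair $\bigl(0, \begin{bmatrix}1&0\\0&0\end{bmatrix}\bigr)$ and $a$ by counting common neighbors of the adjacent pair $\bigl(0, I\bigr)$, exactly via the case analysis on vanishing entries that you outline (the reduction to one representative pair per rank being justified just as you say, by transitivity together with Remark~\ref{rank}). The only work your proposal defers --- actually carrying out the double-determinant count --- is precisely the body of the paper's proof, and your setup would reproduce it; note only that your alternative eigenvalue route would require computing the character sums $\sum_{s\in\operatorname{GL}_2(\Bbb F_q)}\chi(\operatorname{Tr}(As))$ independently, whereas the paper derives those sums \emph{from} this theorem, so that direction cannot cite the paper's later proposition without circularity.
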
 

\begin{proof}
By Proposition~\ref{connected} we know that the unit-graph on $\operatorname{Mat}_2(\Bbb F_q)$ is $\vert \operatorname{GL}_2(\Bbb F_q) \vert$-regular. By Equation~\eqref{sizeGLn} the order of $\operatorname{GL}_2(\Bbb F_q)$ is $(q^2-1)(q^2-q)$, so the  graph is $(q^4-q^3-q^2+q)$-regular.

Now we want to count the number of common neighbors of a pair of nonadjacent vertices. We take $\begin{bmatrix}
		0 & 0 \\
		0 & 0 \\
	  \end{bmatrix}$ and $\begin{bmatrix}
		1 & 0 \\
		0 & 0 \\
	  \end{bmatrix}$, since they are nonadjacent. We assume $A=\begin{bmatrix}
		a & b \\
		c & d \\
	  \end{bmatrix}$ to satisfy $A \sim \begin{bmatrix}
		0 & 0 \\
		0 & 0 \\
	  \end{bmatrix}$ and $A \sim \begin{bmatrix}
		1 & 0 \\
		0 & 0 \\
	  \end{bmatrix}$. Then, $\det A \neq 0$ and $A - \begin{bmatrix}
1&0\\
0&0
\end{bmatrix} = \begin{bmatrix}
a-1&b\\
c&d
\end{bmatrix} $ should be a unit.

\textbf{Case 1:} Exactly two entries of $A$ are nonzero.
\vspace{-0.3cm}
\begin{itemize}
\item Let $A=\begin{bmatrix}
0&b\\
c&0
\end{bmatrix} $ where $b$ and $c$ are nonzero. Then, $A - \begin{bmatrix}
1&0\\
0&0
\end{bmatrix}=\begin{bmatrix}
	-1&b\\
	c&0
\end{bmatrix}$ should be a unit. Since $b$ has $q-1$ many different possibilities and so does $c$, in total we have $(q-1)^2$ many different possibilities for $A$.
\vspace{-0.3cm}
\item Let $A= \begin{bmatrix}
a&0\\
0&d
\end{bmatrix} $ where $a$ and $d$ are nonzero. Then, $A - \begin{bmatrix}
1&0\\
0&0
\end{bmatrix}= \begin{bmatrix}
a-1&0\\
0&d
\end{bmatrix}$ and $(a-1)d \neq 0$, which means $a \neq 1$. Since $a$ has $q-2$ and $d$ has $q-1$ many different possibilities, in total we have $(q-1)(q-2)$ many different possibilities for $A$.
\end{itemize}
\textbf{Case 2:} Exactly three entries of $A$ are nonzero.
\vspace{-0.3cm}
\begin{itemize}
	\item Let $A= \begin{bmatrix}
	0&b\\
	c&d
	\end{bmatrix} $ where $b,c,d$ are all nonzero. Also, $ \begin{bmatrix}
	-1&b\\
	c&d
	\end{bmatrix}$ should be a unit. Hence $-d-bc \neq 0$. This means $ -d \neq bc $ i.e. $ 1 \neq -bc d^{-1} $. Since $b$ has $q-1$, $c$ has $q-1$ and $d$ has $q-2$ many different possibilities, in total we have $(q-1)^2  (q-2)$ many different possibilities for $A$.
\vspace{-0.3cm}
\item Let $A=\begin{bmatrix}
		a&0\\
		c&d
	\end{bmatrix}$ where $a,c,d$ are all nonzero. Also, $ \begin{bmatrix}
		a-1&0\\
		c&d
	\end{bmatrix}$ should be a unit. Hence, $(a-1)d \neq 0$ i.e. $a \neq 1$. Since $a$ has $q-2$, $c$ has $q-1$ and $d$ has $q-1$ many different possibilities, in total we have $(q-1)^2  (q-2)$ many different possibilities for $A$.
\vspace{-0.3cm}
\item  Let $A= \begin{bmatrix}
a&b\\
0&d
\end{bmatrix} $ where $a,b,d$ are all nonzero. This case is the same with $A=\begin{bmatrix}
a&0\\
c&d
\end{bmatrix}$ case, so we have in total $ (q-1)^2 (q-2)$ many different possibilities for $A$ in this case.
\vspace{-0.3cm}
\item Let $A= \begin{bmatrix}
	a&b\\
	c&0
	\end{bmatrix} $ where $a,b,c$ are all nonzero. Here, $\begin{bmatrix}
	a-1 & b\\
	c & 0
	\end{bmatrix}$ should be a unit but since $b$ and $c$ are both nonzero, it is automatically satisfied. Hence each one of $a,b,c$ has $q-1$ many different possibilities, in total we have $(q-1)^3$ many different possibilities for $A$.
\end{itemize}
\textbf{Case 3:} None of the entries of $A$ is zero. We have $ \det \begin{bmatrix}
a&b\\
c&d
\end{bmatrix} \neq 0 $ and $ \det \begin{bmatrix}
a-1&b\\
c&d
\end{bmatrix} \neq 0$.
\vspace{-0.3cm}
\begin{itemize}
\item Assume $bc d^{-1} = -1$. We have $ad-bc \neq 0$, i.e. $a \neq bc d^{-1}$. We also have $(a-1)d-bc \neq 0 $. This means $a \neq bc d^{-1} +1$. We have $q-1$ for $b$, $q-1$ for $c$, only $1$ choice for $d$ and $q-2$ many possibilities for $a$. In total we have $(q-1)^2 (q-2)$ many possibilities.
\vspace{-0.3cm}	
\item Assume $bc d^{-1} \neq -1$. We have $q-1$ many different possibilities for $b$, $q-1$ many for $c$, $q-2$ for $d$ and $q-3$ many for $a$ (since $a \neq 0$, $a \neq bc d^{-1}$ and $a \neq bc d^{-1}+1$). We have in total $(q-1)^2 (q-2) (q-3)$ many possibilities for $A$. 
\end{itemize}
As a result, case 1 gives us $2q^2-5q+3$ many different possibilities for $A$, case 2 gives $4q^3-15q^2+18q-7$ and 
case 3 gives $q^4-6q^3+13q^2-12q+4$ which sums to $q^4-2q^3+q$.
  
Now we want to count the number of common neighbors of a pair of adjacent vertices. We pick $\begin{bmatrix}
		1 & 0 \\
		0 & 1 \\
	  \end{bmatrix}$ and $\begin{bmatrix}
		0 & 0 \\
		0 & 0 \\
	  \end{bmatrix}$, since they are adjacent. We assume $A=\begin{bmatrix}
		a & b \\
		c & d \\
	  \end{bmatrix}$ satisfy $A \sim \begin{bmatrix}
		1 & 0 \\
		0 & 1 \\
	  \end{bmatrix}$ and $A \sim \begin{bmatrix}
		0 & 0 \\
		0 & 0 \\
	  \end{bmatrix}$. Then, $\det A \neq 0$ and $A - \begin{bmatrix}
1&0\\
0&1
\end{bmatrix} = \begin{bmatrix}
a-1&b\\
c&d-1
\end{bmatrix}$ should be a unit.

\textbf{Case 1:} Exactly two entries of $A$ are nonzero.
\vspace{-0.3cm}
\begin{itemize}
\item Let $A=\begin{bmatrix}
0&b\\
c&0
\end{bmatrix} $ where $b$ and $c$ are nonzero. We have $A - \begin{bmatrix}
1&0\\
0&1
\end{bmatrix}=\begin{bmatrix}
	-1&b\\
	c&-1
\end{bmatrix}$ and $1-bc \neq 0$ i.e. $b \neq c^{-1}$. Hence $b$ has $q-1$ and $c$ has $q-2$ many different possibilities, in total we have $(q-1)(q-2)$ many different possibilities for $A$.
\vspace{-0.3cm}
\item Let $A= \begin{bmatrix}
a&0\\
0&d
\end{bmatrix} $ where $a$ and $d$ are nonzero. We have $A - \begin{bmatrix}
1&0\\
0&1
\end{bmatrix}= \begin{bmatrix}
a-1&0\\
0&d-1
\end{bmatrix}$ and $(a-1)(d-1) \neq 0$ which means both $a \neq 1$ and $d \neq 1$. Since $a$ has $q-2$ many possibilities and so does $d$, in total we have $(q-2)^2$ many different possibilities for $A$.
\end{itemize}
\textbf{Case 2:} Exactly three entries of $A$ are nonzero.
\vspace{-0.3cm}
\begin{itemize}
	\item Let $A= \begin{bmatrix}
	0&b\\
	c&d
	\end{bmatrix} $ where $b,c,d$ are all nonzero. We also have that $ \begin{bmatrix}
	-1&b\\
	c&d-1
	\end{bmatrix}$ is a unit, so $1-d \neq bc$.
\vspace{-0.3cm}
\begin{itemize}
\item[$\square$] If $d=1$, $b$ and $c$ can be anything. So we have $q-1$ many for $b$ and $q-1$ many for $c$, in total it is $(q-1)^2$.
\vspace{-0.3cm}
\item[$\square$] If $d \neq 1$, then we have $q-2$ many for $d$, $q-1$ many for $b$ and $q-2$ many for $c$, so in total it is $(q-1)(q-2)^2$.
\end{itemize}
\vspace{-0.3cm}
\item Let $A=\begin{bmatrix}
a&0\\
c&d
\end{bmatrix} $. We have that $ \begin{bmatrix}
a-1&0\\
c&d-1
\end{bmatrix} $ is a unit and $(a-1)(d-1) \neq 0$ i.e. $a \neq 1$, $d \neq 1$. Since $a$ has $q-2$, $c$ has $q-1$ and $d$ has $q-2$ many different possibilities, in total we have $(q-2)^2 (q-1)$ many different possibilities for $A$.
\end{itemize}	
Notice that $\begin{bmatrix}
0&b\\
c&d
\end{bmatrix}$ case and $\begin{bmatrix}
a&b\\
c&0
\end{bmatrix}$ case gives us the same count. Similarly, $\begin{bmatrix}
a&0\\
c&d
\end{bmatrix}$ and $\begin{bmatrix}
a&b\\
0&d
\end{bmatrix}$ gives the same count. Therefore, in case 2 we have $4q^3-18q^2+28q-14$ many possibilities in total.
	
\textbf{Case 3:} None of the entries of $A$ is zero. We have $ \det \begin{bmatrix}
a&b\\
c&d
\end{bmatrix} \neq 0 $ and $ \det \begin{bmatrix}
a-1&b\\
c&d-1
\end{bmatrix} \neq 0$.
\vspace{-0.3cm}
\begin{itemize}
\item $a=1$, $d \neq 1$. Since $d$ has $q-2$, $b$ has $q-1$ and $c$ has $q-2$ many different possibilities, in total we have $(q-2)^2 (q-1)$ many different possibilities for $A$.
\vspace{-0.3cm}
\item $a \neq 1$, $d=1$. This gives the same count with $a=1$, $d \neq 1$ case.
\vspace{-0.3cm}
\item $a=1$, $d=1$.
$b$ has $q-1$ and $c$ has $q-2$, we have $(q-1)(q-2)$ many different possibilities for $A$.
\vspace{-0.3cm}
\item $a \neq 1$, $d \neq 1$.
\vspace{-0.3cm}
	\begin{itemize}
		\item[$\blacksquare$] $a+d=1$. Notice $a+d=1 \iff ad+1-a-d=ad \iff (a-1)(d-1)=ad$. Since $a$ has $q-2$, $d$ has $1$, $b$ has $q-1$ and $c$ has $q-2$ many different possibilities, in total we have $(q-1)(q-2)^2$ many different possibilities for $A$.
		\vspace{-0.3cm}
		\item[$\blacksquare$] $a+d \neq 1$. Since $a$ has $q-2$, $d$ has $q-3$, $b$ has $q-1$ and $c$ has $q-3$ many different possibilities, in total we have $(q-3)^2 (q-2) (q-1)$ many different possibilities for $A$.
\end{itemize}
\end{itemize}
As a result, case 1 gives us $2q^2-7q+6$ many different possibilities for $A$, case 2 gives $4q^3-18q^2+28q-14$ and case 3 gives $q^4-6q^3+15q^2-18q+8$ which sums to $q^4-2q^3-q^2+3q$.
\end{proof}

Note that the last theorem shows us that the parameters of the unit-graph on $\operatorname{Mat}_2(\Bbb F_q)$ agrees with the parameters of another family of strongly regular graphs, \emph{Latin square graphs}. It is a well-known fact that an \emph{orthogonal array} $OA(k,n)$ is equivalent to a set of $k-2$ mutually orthogonal Latin squares. Moreover, the graph defined by $OA(k,n)$ is strongly regular with parameters \[(n^2, (n-1)k, n-2+(k-1)(k-2),k(k-1)),\] by Theorem 10.4.2 in \cite{Godsil}. Hence, if we let $n=q^2$ and $k=q^2-q$ then the parameters of $OA(k,n)$ becomes the same with the parameters of the unit-graph on $\operatorname{Mat}_2(\Bbb F_q)$. In general, a strongly regular graph is not determined by its parameters, and one can wonder that in this case if the unit-graph on $\operatorname{Mat}_2(\Bbb F_q)$ and $OA(k,n)$ are isomorphic or not. Unfortunately, we do not have a definite answer to that in this paper.

The following lemma is again a well-known fact, we refer the reader to \cite{Godsil} for the proof.
\begin{lem}
Let $\Bbb A$ be the adjacency matrix of the $(n,k,a,c)$ strongly regular graph $G$. Let $\Delta=(a-c)^2+4(k-c)$. Then,  
\begin{itemize}
\item the largest eigenvalue of $\Bbb A$ is $\lambda_1= k$ with multiplicity $1$;
\item the other eigenvalues of $\Bbb A$ are $\lambda_2= \frac{(a-c)+\sqrt{\Delta}}{2}$ with multiplicity $m_2$ and $\lambda_3= \frac{(a-c)-\sqrt{\Delta}}{2}$ with multiplicity $m_3$ where 
\[ m_2= \frac{1}{2} \left( (n-1)-\frac{2k+(n-1)(a-c)}{\sqrt{\Delta}}\right) \text{ and } m_3= \frac{1}{2}\left( (n-1)+\frac{2k+(n-1)(a-c)}{\sqrt{\Delta}}\right).\]
\end{itemize}
\end{lem}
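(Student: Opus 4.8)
The plan is to translate the combinatorial definition of a strongly regular graph into a single quadratic identity satisfied by the adjacency matrix $\Bbb A$, and then read off the entire spectrum from that identity. Writing $I$ for the identity matrix and $J$ for the all-ones matrix, I would first verify that
\[ \Bbb A^2 = (k-c) I + (a-c) \Bbb A + c J. \]
This is checked entry by entry: the $(i,i)$-entry of $\Bbb A^2$ counts closed walks of length two at vertex $i$, which equals the degree $k$; the $(i,j)$-entry for adjacent $i,j$ counts their common neighbors, which is $a$; and for distinct nonadjacent $i,j$ it is $c$. Matching these against the right-hand side (the term $cJ$ contributes $c$ in every position, $I$ corrects the diagonal, and $\Bbb A$ corrects the adjacent off-diagonal entries) gives the identity.

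With this identity in hand the eigenvalues follow quickly. Since $G$ is $k$-regular, the all-ones vector $\mathbf 1$ satisfies $\Bbb A \mathbf 1 = k \mathbf 1$, so $\lambda_1 = k$ is an eigenvalue; because $G$ is connected, the Perron--Frobenius theorem forces its multiplicity to be exactly $1$. As $\Bbb A$ is symmetric, every other eigenvector $v$ may be taken orthogonal to $\mathbf 1$, and for such $v$ we have $Jv = 0$. Applying the identity to $v$ with $\Bbb A v = \lambda v$ yields $\lambda^2 v = (k-c) v + (a-c)\lambda v$, so every remaining eigenvalue is a root of
\[ \lambda^2 - (a-c)\lambda - (k-c) = 0, \]
whose two solutions are exactly $\lambda_2 = \frac{(a-c)+\sqrt{\Delta}}{2}$ and $\lambda_3 = \frac{(a-c)-\sqrt{\Delta}}{2}$ with $\Delta = (a-c)^2 + 4(k-c)$.

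Finally, to pin down the multiplicities $m_2$ and $m_3$ I would set up two linear equations. Counting dimensions gives $1 + m_2 + m_3 = n$, and since $\Bbb A$ has zero diagonal its trace vanishes, giving $k + m_2 \lambda_2 + m_3 \lambda_3 = 0$. Using $\lambda_2 - \lambda_3 = \sqrt{\Delta}$ to eliminate one unknown, one solves this $2 \times 2$ system and arrives at the stated closed forms for $m_2$ and $m_3$.

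I expect the only delicate points to be bookkeeping rather than conceptual: establishing the quadratic identity requires carefully separating the diagonal, adjacent, and nonadjacent cases, and the claim that $\lambda_1 = k$ has multiplicity exactly $1$ must invoke connectedness (equivalently Perron--Frobenius), which is precisely where that hypothesis is used. I would also record in passing that $\Delta > 0$ under the standing assumption that $G$ is neither empty nor complete, so that $\lambda_2$ and $\lambda_3$ are real and distinct and the multiplicity formulas above are genuinely well defined.
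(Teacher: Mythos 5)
Your proof is correct. The paper itself does not prove this lemma---it is quoted as a well-known fact with a pointer to Godsil and Royle---and your argument (the quadratic identity $\Bbb A^2=(k-c)I+(a-c)\Bbb A+cJ$ checked entrywise, Perron--Frobenius giving multiplicity $1$ for $\lambda_1=k$ on a connected graph, the characteristic quadratic $\lambda^2-(a-c)\lambda-(k-c)=0$ for eigenvectors orthogonal to $\mathbf{1}$, and the dimension and trace equations $1+m_2+m_3=n$, $k+m_2\lambda_2+m_3\lambda_3=0$ for the multiplicities) is precisely the standard proof found in that reference, so there is nothing to add beyond your own correct observation that connectedness (equivalently $c\geqslant 1$, which holds in the paper's application) is genuinely needed for the multiplicity-one claim.
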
 

\begin{cor} \label{StrongEigen} 
The spectrum of the unit-graph on $\operatorname{Mat}_2(\Bbb F_q)$ is $\{(q^4-q^3-q^2+q,1), (q,q^4-q^3-q^2+q), (q-q^2,q^3+q^2-q-1) \}$ where the first entries of the ordered pairs denote the eigenvalues and the second entries are the corresponding multiplicities. 
\end{cor}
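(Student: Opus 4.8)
The plan is to apply the preceding lemma directly, feeding in the strongly-regular parameters $(n,k,a,c) = (q^4,\ q^4-q^3-q^2+q,\ q^4-2q^3-q^2+3q,\ q^4-2q^3+q)$ supplied by Theorem~\ref{stronglyRegular}. Since that lemma expresses all three eigenvalues and their multiplicities purely in terms of $n,k,a,c$ and the auxiliary quantity $\Delta = (a-c)^2 + 4(k-c)$, the entire argument reduces to substitution and polynomial simplification; no new combinatorial or spectral input is required.

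First I would compute the two combinations that recur throughout the formulas, namely $a-c = -q^2 + 2q$ and $k - c = q^3 - q^2$. From these I would form $\Delta = (-q^2+2q)^2 + 4(q^3 - q^2)$ and check that the cross terms cancel, so that $\Delta = q^4$ and hence $\sqrt{\Delta} = q^2$. This is the key simplification that makes everything else come out rationally: substituting into $\lambda_2 = \tfrac{(a-c)+\sqrt\Delta}{2}$ and $\lambda_3 = \tfrac{(a-c)-\sqrt\Delta}{2}$ immediately yields $\lambda_2 = q$ and $\lambda_3 = q - q^2$, while $\lambda_1 = k = q^4 - q^3 - q^2 + q$ with multiplicity $1$ is just the regularity degree already recorded in Theorem~\ref{stronglyRegular}.

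Next I would substitute the same quantities into the multiplicity formulas. The only mildly involved step is evaluating $2k + (n-1)(a-c)$: expanding $(q^4-1)(-q^2+2q)$ and adding $2k$ gives a degree-six polynomial which, after dividing by $\sqrt\Delta = q^2$, collapses to $-q^4 + 2q^3 + 2q^2 - 2q - 1$. Feeding this into the lemma's expressions for $m_2$ and $m_3$ produces $m_2 = q^4 - q^3 - q^2 + q$ and $m_3 = q^3 + q^2 - q - 1$, matching the stated multiplicities of the eigenvalues $q$ and $q-q^2$ respectively.

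Finally, as a consistency check I would verify that $1 + m_2 + m_3 = q^4 = n$, confirming that the three eigenspaces account for the full adjacency matrix. The main obstacle here is not conceptual but one of careful algebraic bookkeeping: the parameters are quartic in $q$, and a single arithmetic slip in forming $\Delta$ or $2k+(n-1)(a-c)$ would propagate into incorrect multiplicities, so the cancellation $\Delta = q^4$ is worth confirming explicitly before moving on.
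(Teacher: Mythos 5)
Your proposal is correct and follows exactly the paper's approach: substituting the strongly regular parameters from Theorem~\ref{stronglyRegular} into the eigenvalue and multiplicity formulas of the preceding lemma. All of your intermediate computations ($a-c=-q^2+2q$, $k-c=q^3-q^2$, $\Delta=q^4$, and the resulting eigenvalues and multiplicities) check out, so your writeup is simply a more detailed version of the paper's one-line proof.
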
 

\begin{proof}
Since we know the parameters of the unit-graph on $\operatorname{Mat}_2(\Bbb F_q)$ from Theorem~\ref{stronglyRegular}, we can plug the parameters in the formulae in the previous lemma, and the result follows. 
\end{proof}

\begin{prop}
The formulae
\[\sum_{ s \in \operatorname{GL}_2(\Bbb F_q)} \chi( s_{11})=q-q^2  \quad \text{and} \quad \sum_{ s \in \operatorname{GL}_2(\Bbb F_q)} \chi( s_{11}+ s_{22})= \sum_{ s \in \operatorname{GL}_2(\Bbb F_q)} \chi( s_{11}) \chi (s_{22})= q\] hold for any finite field $\Bbb F_q$ and for any non-trivial character on $\Bbb F_q$.
\end{prop}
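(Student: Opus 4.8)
The equality $\chi(s_{11}+s_{22}) = \chi(s_{11})\chi(s_{22})$ is immediate from the fact that $\chi$ is an additive character, so it suffices to establish the two numerical values. My plan for both is the same complementary-counting strategy: rather than summing over $\operatorname{GL}_2(\Bbb F_q)$ directly, I would first sum the relevant quantity over all of $\operatorname{Mat}_2(\Bbb F_q)$ --- where the four entries are completely free --- and then subtract off the contribution of the singular matrices. Writing the entries of $s$ as $a = s_{11}$, $b = s_{12}$, $c = s_{21}$, $d = s_{22}$, the key observation is that each full sum over $\operatorname{Mat}_2(\Bbb F_q)$ vanishes: $\sum \chi(a) = (\sum_a \chi(a)) q^3 = 0$ and $\sum \chi(a)\chi(d) = (\sum_a \chi(a))(\sum_d \chi(d)) q^2 = 0$, using that $\sum_{x \in \Bbb F_q} \chi(x) = 0$ for a nontrivial character. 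Hence each desired $\operatorname{GL}_2$-sum equals the negative of the corresponding sum taken over the singular matrices $\det s = 0$.

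For the first formula I would evaluate $\sum_{\det s = 0} \chi(a)$ by splitting on whether $a = 0$. When $a \neq 0$ the relation $ad = bc$ determines $d$ uniquely from any free choice of $b, c$, giving $q^2$ singular matrices for each such $a$; when $a = 0$ the condition becomes $bc = 0$, which has $2q-1$ solutions $(b,c)$ with $d$ free, giving $(2q-1)q$ matrices. Using $\sum_{a \neq 0}\chi(a) = -1$ and $\chi(0) = 1$, the singular sum collapses to $(-1)q^2 + (2q-1)q = q^2 - q$, so the $\operatorname{GL}_2$-sum is $-(q^2-q) = q - q^2$, as claimed.

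For the second formula I would organize the singular sum $\sum_{ad = bc}\chi(a+d)$ by first fixing $a, d$ and counting the pairs $(b,c)$ with $bc = ad$: there are $q-1$ such pairs when $ad \neq 0$ and $2q-1$ when $ad = 0$. This reduces the computation to the two auxiliary character sums $\sum_{ad = 0}\chi(a+d)$ and $\sum_{ad \neq 0}\chi(a+d)$. The first I would evaluate by inclusion--exclusion over the events $a = 0$ and $d = 0$ (each contributing a vanishing sum $\sum_x \chi(x) = 0$, overlapping only at the origin where $\chi(0) = 1$), giving $-1$; the second is then $0 - (-1) = 1$ since the total two-variable sum is $0$. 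Assembling, the singular sum equals $(q-1)\cdot 1 + (2q-1)\cdot(-1) = -q$, and so the $\operatorname{GL}_2$-sum is $-(-q) = q$.

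The individual computations are elementary once this strategy is fixed; the only place demanding genuine care is the bookkeeping of the singular locus --- in particular correctly separating the degenerate cases $a = 0$ (resp. $ad = 0$) from the generic ones and applying inclusion--exclusion there. This is precisely where the naive ``one entry is freely determined'' count breaks down, and it is the resulting corrections that produce the final values $q - q^2$ and $q$.
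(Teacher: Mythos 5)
Your proposal is correct, and all the intermediate counts check out: the singular matrices with $s_{11}=a\neq 0$ number $q^2$ for each such $a$, those with $a=0$ number $(2q-1)q$, giving the singular sum $q^2-q$ and hence $q-q^2$ for the first formula; and the pair counts $q-1$ (when $ad\neq 0$) versus $2q-1$ (when $ad=0$), together with $\sum_{ad=0}\chi(a+d)=-1$, give the singular sum $-q$ and hence the value $q$ for the second. However, your route is genuinely different from the paper's. The paper does not compute these character sums directly at all: it has already established (by a lengthy combinatorial count of common neighbors) that the unit-graph on $\operatorname{Mat}_2(\Bbb F_q)$ is strongly regular with parameters $(q^4,\,q^4-q^3-q^2+q,\,q^4-2q^3-q^2+3q,\,q^4-2q^3+q)$, and it reads off the spectrum $\{q^4-q^3-q^2+q,\ q,\ q-q^2\}$ with multiplicities from the standard strongly-regular-graph eigenvalue formulas. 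It then observes that your two sums are exactly the Cayley-graph eigenvalues $\lambda_A$ and $\lambda_B$ attached to the rank-one matrix $A$ and the identity $B$, and pins down which eigenvalue is which by a multiplicity argument: $m_A$ is at least the number of rank-one matrices, $m_B \geqslant |\operatorname{GL}_2(\Bbb F_q)|$, and $q^4 = m_A+m_B+1$ forces equality, so $\lambda_A = q-q^2$ and $\lambda_B = q$. Your complementary-counting argument is more elementary and self-contained --- it needs nothing beyond orthogonality of characters and careful bookkeeping on the singular locus --- whereas the paper's argument is essentially free once the strongly regular structure is in hand, and it illustrates the paper's broader theme that spectral graph theory can be used to evaluate character sums; indeed, your direct computation could serve as an independent verification of the graph's spectrum rather than a consequence of it.
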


\begin{proof}
Consider the unit-graph on $\operatorname{Mat}_2(\Bbb F_q)$. Let $A = \begin{bmatrix}
1&0\\
0&0
\end{bmatrix}$ and $B = \begin{bmatrix}
1&0\\
0&1
\end{bmatrix}$. We have \[ \lambda_{A} = \sum_{s \in \operatorname{GL}_2(\Bbb F_q)} \chi( \operatorname{Tr}(As))= \sum_{ s \in \operatorname{GL}_2(\Bbb F_q)} \chi( s_{11})  \quad \text{and} \quad \lambda_{B} = \sum_{s \in \operatorname{GL}_2(\Bbb F_q)} \chi( \operatorname{Tr}(Bs))= \sum_{ s \in \operatorname{GL}_2(\Bbb F_q)} \chi( s_{11}+s_{22}).\]
Since matrices of same rank correspond to same eigenvalue, we know the multiplicity of $\lambda_{A}$ denoted by $m_A \geqslant$ the number of rank $1$ matrices, and the multiplicity of $\lambda_{B}$ denoted by $m_B \geqslant | \operatorname{GL}_2(\Bbb F_q) |$. But we also have $q^4=m_A+m_B+1$. This forces $m_A$ to be the number of rank $1$ matrices and $m_B$ to be $| \operatorname{GL}_2(\Bbb F_q) |$. Hence, $m_A=q^4-| \operatorname{GL}_2(\Bbb F_q) |-1= q^3+q^2-q-1$ and $m_B= q^4-q^3-q^2+q$. By Corollary~\ref{StrongEigen} we know that $q-q^2$ is an eigenvalue of this graph with multiplicity $q^3+q^2-q-1$ and $q$ is an eigenvalue with multiplicity $q^4-q^3-q^2+q$. Hence, $ \lambda_{A}$ should be $q-q^2$ and  $\lambda_{B}$ should be $q$.
\end{proof}

Since we know every element of $\operatorname{Mat}_n(\Bbb F_q)$ can be written as a sum of two invertible matrices as long as $\operatorname{Mat}_n(\Bbb F_q) \neq \Bbb F_2$, one may wonder similarly if every element of the matrix ring can be written as a sum of two $\operatorname{SL}_n$-matrices under some mild hypothesis or not. In the following proposition, we first consider a special case of this question, namely when $n=2$.

\begin{prop} \label{SpecialUnits}
Every element of $\operatorname{Mat}_2(\Bbb F_q)$ can be written as a sum of two $\operatorname{SL}_2$-matrices. 
\end{prop}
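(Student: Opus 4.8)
The plan is to reduce the problem to a short list of canonical representatives and then exhibit an explicit decomposition for each. By Remark~\ref{rank} it suffices to decompose one matrix from every $\operatorname{SL}_2$-equivalence class, and by Theorem~\ref{equivalency} these classes are in bijection with pairs $(\operatorname{rank},\det)$. For $n=2$ this yields exactly three types: the zero matrix (rank $0$); the single rank-$1$ class, with canonical representative $\begin{bmatrix} 1 & 0 \\ 0 & 0 \end{bmatrix}$; and the rank-$2$ classes, one for each $\delta \in \Bbb F_q^{\ast}$, with canonical representative $\begin{bmatrix} 1 & 0 \\ 0 & \delta \end{bmatrix}$ coming from the matrix $D$ constructed in the proof of Theorem~\ref{equivalency}. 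The key structural difference from the $\operatorname{GL}_2$ situation is that the invertible matrices no longer form a single equivalence class, so I must produce a decomposition that is valid for every possible value $\delta$ of the determinant, not just for each rank.

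The two degenerate types are immediate. For the zero matrix I would write $0 = I + (-I)$, noting that $\det(-I) = (-1)^2 = 1$ so that both summands lie in $\operatorname{SL}_2(\Bbb F_q)$. For the rank-$1$ representative I would set $N = \begin{bmatrix} 1 & 0 \\ 0 & 0 \end{bmatrix} - M$, impose $\det M = \det N = 1$, and solve the resulting small system; for instance $\begin{bmatrix} 1 & 0 \\ 0 & 0 \end{bmatrix} = \begin{bmatrix} 0 & 1 \\ -1 & 0 \end{bmatrix} + \begin{bmatrix} 1 & -1 \\ 1 & 0 \end{bmatrix}$ works over any $\Bbb F_q$.

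The rank-$2$ case carries the real content, and I would treat it uniformly in $\delta$. Writing $M = \begin{bmatrix} a & b \\ c & d \end{bmatrix}$ and $N = \begin{bmatrix} 1-a & -b \\ -c & \delta - d \end{bmatrix}$ so that $M + N = \begin{bmatrix} 1 & 0 \\ 0 & \delta \end{bmatrix}$, the two determinant conditions read $ad - bc = 1$ and $(1-a)(\delta - d) - bc = 1$. Subtracting the first from the second and simplifying collapses the second condition into the single \emph{linear} equation $d = \delta(1-a)$; substituting this back leaves $bc = a\delta(1-a) - 1$, which is solvable over any field (choose $a$ freely, then pick $b,c$ with the required product). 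Taking $a = 0$ already gives the clean identity $\begin{bmatrix} 1 & 0 \\ 0 & \delta \end{bmatrix} = \begin{bmatrix} 0 & 1 \\ -1 & \delta \end{bmatrix} + \begin{bmatrix} 1 & -1 \\ 1 & 0 \end{bmatrix}$. The main obstacle I anticipate is not any single computation but ensuring genuine uniformity: that the linear reduction of the second determinant condition holds for all $\delta$ and that the leftover relation $bc = a\delta(1-a) - 1$ is always solvable, with no exceptional characteristic or exceptional $\delta$ forcing a failure. Once that is verified, Remark~\ref{rank} propagates each representative's decomposition to its entire $\operatorname{SL}_2$-equivalence class, completing the proof.
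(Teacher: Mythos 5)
Your proposal is correct and follows essentially the same route as the paper: reduce to one representative per $(\operatorname{rank},\det)$ class via Theorem~\ref{equivalency} and Remark~\ref{rank}, then exhibit explicit $\operatorname{SL}_2$-decompositions of the zero matrix, the rank-$1$ representative, and $\begin{bmatrix} 1 & 0 \\ 0 & \delta \end{bmatrix}$ for each $\delta \in \Bbb F_q^{\ast}$. The only difference is in the explicit identities: the paper's rank-$2$ decomposition uses entries $\alpha^{\pm 1}$, whereas yours avoids inverses entirely and is uniform in $\delta$ (at $\delta=0$ it even reproduces your rank-$1$ identity), a slightly cleaner but mathematically equivalent choice.
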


\begin{proof} 
By Remark~\ref{rank} we know that if we can write a matrix as a sum of two $\operatorname{SL}_2$-matrices, then every $2 \times 2$ matrix with the same rank and determinant can be written as a sum of two $\operatorname{SL}_2$-matrices. We will show that the zero matrix, a matrix with rank one and a matrix with determinant $\alpha$ for some $\alpha \in \Bbb F_q^{\ast}$, in particular $\begin{bmatrix}
		1 & 0 \\
		0 & 0 \\
		\end{bmatrix}$ and $\begin{bmatrix}
		1 & 0 \\
		0 & \alpha \\
		\end{bmatrix}$, can be written as a sum of two $\operatorname{SL}_2$-matrices and the claim will follow from it. 
		\[ \begin{bmatrix}
		1 & 0 \\
		0 & 1 \\
	  \end{bmatrix} + \begin{bmatrix}  
	 	-1 & 0 \\
		0 & -1 \\
	    \end{bmatrix} = \begin{bmatrix}
		0 & 0 \\
		0 & 0 \\
		\end{bmatrix}  \]

\[ \begin{bmatrix}
		0 & -1 \\
		1 & 0 \\
	  \end{bmatrix} + \begin{bmatrix}  
	 	1 & 1 \\
		-1 & 0 \\
	    \end{bmatrix} = \begin{bmatrix}
		1 & 0 \\
		0 & 0 \\
		\end{bmatrix}  \]
	
Let $\alpha$ be any nonzero element of $\Bbb F_q$. Then, $\alpha^{-1}$ exists and we have:

\[ \begin{bmatrix}
		0 & \alpha^{-1} \\
		-\alpha & \alpha \\
	  \end{bmatrix} + \begin{bmatrix}  
	 	1 & -\alpha^{-1} \\
		\alpha & 0 \\
	    \end{bmatrix} = \begin{bmatrix}
		1 & 0 \\
		0 & \alpha \\
		\end{bmatrix}.\]
		
\end{proof}

\begin{thm} \label{SpecialUnitsGeneral}
Let $n \geqslant 2$. Every nonzero element of $\operatorname{Mat}_n(\Bbb F_q)$ can be written as a sum of two $\operatorname{SL}_n$-matrices. If $n$ is even or if $\operatorname{char}(\Bbb F_q)=2$, then the zero matrix can be written as a sum of two $\operatorname{SL}_n$-matrices; otherwise it requires three $\operatorname{SL}_n$-matrices.
\end{thm}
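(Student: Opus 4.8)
The plan is to reduce, via Remark~\ref{rank} and Theorem~\ref{equivalency}, to the canonical diagonal representatives of each $\operatorname{SL}_n$-equivalence class, and then to assemble a two-term decomposition out of small blocks whose summands already lie in $\operatorname{SL}$. By Theorem~\ref{equivalency} every nonzero $A$ is $\operatorname{SL}_n$-equivalent either to $\operatorname{diag}(1,\dots,1,0,\dots,0)$ with $r$ ones (when $\operatorname{rank} A = r < n$) or to $\operatorname{diag}(1,\dots,1,\delta)$ with $\delta = \det A \in \Bbb F_q^{\ast}$ (when $A$ is invertible), so by Remark~\ref{rank} it suffices to decompose one representative $D$ in each class. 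The structural point I would exploit is that the $2\times 2$ identities in Proposition~\ref{SpecialUnits} express $0_2$, $\operatorname{diag}(1,0)$, and $\operatorname{diag}(1,\delta)$ for every $\delta\in\Bbb F_q^{\ast}$ (in particular $I_2$) each as a sum of two matrices both lying in $\operatorname{SL}_2$. Consequently, if $D$ is written as a block-diagonal sum of such pieces, say $D = \bigoplus_j D_j$ with $D_j = S_j + T_j$ and $S_j, T_j \in \operatorname{SL}_2$, then $S := \bigoplus_j S_j$ and $T := \bigoplus_j T_j$ satisfy $\det S = \prod_j \det S_j = 1$ and likewise $\det T = 1$, giving $D = S + T$ with $S, T \in \operatorname{SL}_n$.

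Next I would dispose of the zero matrix separately. Writing $0_n = S + T$ with $S, T \in \operatorname{SL}_n$ forces $T = -S$, whence $\det T = (-1)^n \det S = (-1)^n$; this equals $1$ exactly when $n$ is even or $\operatorname{char}(\Bbb F_q) = 2$, and in that case $0_n = I_n + (-I_n)$ already works. When $n$ is odd and $\operatorname{char}(\Bbb F_q) \neq 2$ no two-term decomposition exists, but three suffice: the matrix $-I_n$ is nonzero, hence by the nonzero case it is a sum $S + T$ of two $\operatorname{SL}_n$-matrices, and then $0_n = I_n + S + T$.

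The remaining work, and the only place requiring genuine case analysis, is the tiling of the nonzero representative $D$. When $n$ is even one can tile $D$ entirely by $2\times 2$ blocks of the admissible shapes above, using a single $\operatorname{diag}(1,0)$ block to absorb the parity when the number of ones is odd and routing $\delta$ into a $\operatorname{diag}(1,\delta)$ block; the determinant bookkeeping of the first paragraph then finishes the case. The essential obstacle is the odd case: an isolated $1\times 1$ block is useless because $\operatorname{SL}_1 = \{1\}$, so the leftover entry cannot be handled in isolation and one is forced to introduce a genuine $3\times 3$ block. I would therefore record explicit $\operatorname{SL}_3$-decompositions of $\operatorname{diag}(1,0,0)$, $\operatorname{diag}(1,1,0)$ and $\operatorname{diag}(1,1,\delta)$ — all obtainable uniformly over every $\Bbb F_q$ from the companion-type matrix $S = \left[\begin{smallmatrix} 0 & 0 & 1 \\ 1 & 0 & c \\ 0 & 1 & b\end{smallmatrix}\right]$, which has $\det S = 1$ for all $b,c$, by solving the single linear condition on $b,c$ making $\det(D - S) = 1$. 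For odd $n$ one then tiles $D$ by one such $3\times 3$ block together with $2\times 2$ blocks covering the remaining (even number of) entries. The last subtlety to check is that the $3\times 3$ block must not be the all-zero block $0_3$, since $\det(-S) = (-1)^3 = -1$ rules out $0_3 = S+T$ in odd characteristic; this is always avoidable because $D \neq 0$, so at least one diagonal $1$ (or the entry $\delta$) can be routed into the $3\times 3$ block, after which the even-sized remainder tiles exactly as before.
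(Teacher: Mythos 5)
Your proof is correct, and its skeleton coincides with the paper's: reduce to the canonical diagonal representative $D$ via Theorem~\ref{equivalency} and Remark~\ref{rank}; dispose of the zero matrix by noting that $0=S+T$ forces $T=-S$, hence $\det T=(-1)^n\det S=(-1)^n$, with $I_n+(-I_n)$ working in the even or characteristic-$2$ case and three summands otherwise; and assemble block-diagonal decompositions from $2\times2$ and $3\times3$ pieces, multiplying determinants across blocks. Indeed, your ``tiling'' is precisely the unrolled form of the paper's induction, which at each step splits $D$ into an $(n-2)\times(n-2)$ block (nonzero, since $D_{11}=1$) and a $2\times2$ block, terminating in a single $3\times3$ block exactly when $n$ is odd. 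The one genuine difference is the $3\times3$ base case: the paper tabulates explicit summands twice, once for $\operatorname{char}(\Bbb F_q)=2$ and once for odd characteristic, whereas you fix the companion-type matrix $S=\left[\begin{smallmatrix}0&0&1\\1&0&c\\0&1&b\end{smallmatrix}\right]$, which has $\det S=1$ identically, and solve $\det(D-S)=1$ for the parameters; for $D=\operatorname{diag}(1,d_2,d_3)$ this condition reads $d_2(d_3-b)-c=2$, which is always solvable in $c$ since its coefficient is $-1$. Your construction is therefore uniform in $q$ and characteristic-free, which is cleaner than the paper's two tables, at the price of a small determinant computation; both arguments rest on the same two pillars ($\operatorname{SL}_n$-equivalence to $D$ and the $2\times2$ case, Proposition~\ref{SpecialUnits}), and your closing cautions (the $3\times3$ block must not be $0_3$ since $\det(-S)=-1$ in odd characteristic, and it can always absorb a diagonal $1$ because $D\neq0$) correspond exactly to the paper's use of $D_{11}=1$ in its induction step.
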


\begin{proof}
First we consider the zero matrix. If $\det(A) =1= \det(-A)$, then the zero matrix can be written as $A+(-A)$ and we are done. But that happens only when $\det(A) = (-1)^n \det(A) = \det(-A)$ is satisfied, which is the case either when $n$ is even or when we work in characteristic $2$. Hence, the zero matrix can be written as $A+(-A)$ when $n$ is even or $\operatorname{char}(\Bbb F_q)=2$. Otherwise, we can write the zero matrix as $A+(-A)$ for some $A \in \operatorname{SL}_n(\Bbb F_q)$, and once we prove every nonzero matrix can be written as a sum of two $\operatorname{SL}_n$-matrices, this implies the zero matrix can be written as a sum of three $\operatorname{SL}_n$-matrices.

Now, we focus on the nonzero matrices. We obtained the result for $n=2$ in the previous proposition. We will prove the claim for $n=3$ and then do induction on $n$. We again note that by Remark~\ref{rank} if we can write a $n \times n$ matrix as a sum of two $\operatorname{SL}_n$-matrices, then every $n \times n$ matrix with the same rank and determinant can be written as a sum of two $\operatorname{SL}_n$-matrices. Let $\alpha \in \Bbb F_q^{\ast}$. If the characteristic of $\Bbb F_q$ is $2$, then we have

\begin{flalign*} 
& \text{rank } 1: \ \begin{bmatrix}
		1 & 0 & 0 \\
		0 & 0 & 0 \\
		0 & 0 & 0 \\
		\end{bmatrix} = \begin{bmatrix}
		1 & 1 & 0 \\
		1 & 0 & 0 \\
		0 & 0 & 1 \\
		\end{bmatrix} + \begin{bmatrix}
		0 & 1 & 0 \\
		1 & 0 & 0 \\
		0 & 0 & 1 \\
		\end{bmatrix} & \text{ rank } 2: \ &  \begin{bmatrix}
      	1 & 0 & 0 \\
      	0 & 1 & 0 \\
      	0 & 0 & 0 \\
      	\end{bmatrix} = \begin{bmatrix}
      	1 & 1 & 0 \\
      	0 & 1 & 1 \\
      	1 & 0 & 0 \\
      	\end{bmatrix} + \begin{bmatrix}
      	0 & 1 & 0 \\
      	0 & 0 & 1 \\
      	1 & 0 & 0 \\
      	\end{bmatrix} \\
& & &
\\
& \text{rank } 3: \ \begin{bmatrix}
      	1 & 0 & 0 \\
      	0 & 1 & 0 \\
      	0 & 0 & \alpha \\
      	\end{bmatrix}= \begin{bmatrix}
      	1 & 1 & 0 \\
      	0 & 0 & 1 \\
      	1 & 0 & 0 \\
      	\end{bmatrix} + \begin{bmatrix}
      	0 & 1 & 0 \\
      	0 & 1 & 1 \\
      	1 & 0 & \alpha \\
      	\end{bmatrix}	.& &
\end{flalign*}	

This finishes the proof for characteristic $2$ case when $n=3$. If $\operatorname{char}(\Bbb F_q) \neq 2$, then we have

\begin{flalign*} 
& \text{rank } 1: \ \begin{bmatrix}
		2 & 0 & 0 \\
		0 & 0 & 0 \\
		0 & 0 & 0 \\
		\end{bmatrix} = \begin{bmatrix}
		1 & 0 & 0 \\
		0 & 0 & -1 \\
		0 & 1 & 0 \\
		\end{bmatrix} + \begin{bmatrix}
		1 & 0 & 0 \\
		0 & 0 & 1 \\
		0 & -1 & 0 \\
		\end{bmatrix} & \text{ rank } 2: \ &  \begin{bmatrix}
      	2 & 0 & 0 \\
      	0 & 1 & 0 \\
      	0 & 0 & 0 \\
      	\end{bmatrix} = \begin{bmatrix}
      	1 & 0  & 0 \\
      	0 & 0 & -1 \\
      	0 & 1 & 0 \\
      	\end{bmatrix} + \begin{bmatrix}
      	1 & 0 & 0 \\
      	0 & 1 & 1 \\
      	0 & -1& 0 \\
      	\end{bmatrix} \\
& & &
\\
& \text{rank } 3: \ \begin{bmatrix}
      	2 & 0 & 0 \\
      	0 & 1 & 0 \\
      	0 & 0 & \frac{\alpha}{2} \\
      	\end{bmatrix}= \begin{bmatrix}
      	1 & 0 & 0 \\
      	0 & 0 & \alpha^{-1} \\
      	0 & -\alpha & \frac{\alpha}{2} \\
      	\end{bmatrix} + \begin{bmatrix}
      	1 & 0 & 0 \\
      	0 & 1 & -\alpha^{-1} \\
      	0 & \alpha & 0 \\
      	\end{bmatrix}	. & &
\end{flalign*}	

This completes the proof for $n=3$ case.

Let $n \geqslant 4$. Assume the claim holds for all $2 \leqslant k < n$. Let $A \in \operatorname{Mat}_n(\Bbb F_q)$ of rank $r$. If $A$ is the zero matrix, we know how to handle it from the previous discussion. Hence, assume $A$ is not the zero matrix. By the proof of Theorem~\ref{equivalency} we know that $A$ is $\operatorname{SL}_n$-equivalent to $D \in \operatorname{Mat}_n(\Bbb F_q)$ such that 
\begin{itemize}
\vspace{-0.3cm}
\item $D_{rr} = 
\begin{cases}
1, & \text{ if } r<n \\
\det(A) , & \text{ if } r=n
\end{cases}$
\vspace{-0.3cm}
\item $D_{ii}=1$ for $i<r$, and the rest of the entries are zero.
\end{itemize} 
Since $n \geqslant 4$, we have $n-2 \geqslant 2$. Also, since $A$ is not the zero matrix, we know $D_{11}=1$. By the induction hypothesis there exist some matrices $A_i$'s with determinant $1$ such that \[ D= \begin{bmatrix}
 A_1 & 0 \\
 0   & A_3
 \end{bmatrix} + \begin{bmatrix}
 A_2 & 0 \\
 0   & A_4
\end{bmatrix} \] where $A_1$, $A_2$ are both $(n-2) \times (n-2)$ and $A_3$, $A_4$ are both $2 \times 2$. Hence, the claim follows.
\end{proof}
In the following corollary, we state the basic properties of the special unit-digraph on $\operatorname{Mat}_n(\Bbb F_q)$. 
\begin{cor}
Let $n \geqslant 2$. The special unit-digraph on $\operatorname{Mat}_n(\Bbb F_q)$ is connected and its diameter is $2$. It can be regarded as a simple (undirected) regular graph if and only if $n$ is even or $\operatorname{char}(\Bbb F_q)=2$. The adjacency matrix of the digraph has at most $n+q-1$ distinct eigenvalues. 
\end{cor}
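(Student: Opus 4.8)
The plan is to treat the three assertions in turn, each reducing to a result already in hand. For connectedness and the diameter, I would argue as in Proposition~\ref{connected}, but now keeping track of orientations. Given distinct vertices $A,B$, the difference $B-A$ is a nonzero element of $\operatorname{Mat}_n(\Bbb F_q)$, so by Theorem~\ref{SpecialUnitsGeneral} we may write $B-A=M+N$ with $M,N\in\operatorname{SL}_n(\Bbb F_q)$. Setting $C=A+N$ yields directed edges $A\to C$ (since $C-A=N$) and $C\to B$ (since $B-C=M$), so the digraph is strongly connected with every ordered pair joined by a directed path of length at most two. To see the diameter is exactly two, I would exhibit a pair at distance two: since $n\geqslant 2$, any rank-one matrix $B$ is singular, hence $B\notin\operatorname{SL}_n(\Bbb F_q)$, so there is no arc $0\to B$, while the path $0\to N\to B$ shows the distance equals two.

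For the second assertion, the out-degree of each vertex $A$ is $|\{B : B-A\in\operatorname{SL}_n\}|=|\operatorname{SL}_n(\Bbb F_q)|$, and likewise its in-degree is $|\operatorname{SL}_n(\Bbb F_q)|$, so the digraph is regular. The edge relation is symmetric precisely when $B-A\in\operatorname{SL}_n\Leftrightarrow A-B\in\operatorname{SL}_n$, i.e. when $\operatorname{SL}_n(\Bbb F_q)=-\operatorname{SL}_n(\Bbb F_q)$. Since $\det(-M)=(-1)^n\det(M)$, this identity holds if and only if $(-1)^n=1$ in $\Bbb F_q$, which is exactly the condition that $n$ is even or $\operatorname{char}(\Bbb F_q)=2$. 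When it holds the Cayley digraph has no loops (as $0\notin\operatorname{SL}_n$) and no repeated arcs, so it is a simple undirected regular graph; when it fails one finds $M\in\operatorname{SL}_n$ with $-M\notin\operatorname{SL}_n$, producing an arc with no reverse, so the graph genuinely remains directed.

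For the eigenvalue count I would mimic Proposition~\ref{NumberOfEigenvalues}. Because $(\operatorname{Mat}_n(\Bbb F_q),+)$ is abelian, Theorem~\ref{character theory} gives the eigenvalues as $\lambda_A=\sum_{s\in\operatorname{SL}_n(\Bbb F_q)}\chi(\operatorname{Tr}(As))$, indexed by $A\in\operatorname{Mat}_n(\Bbb F_q)$. The same substitutions $s\mapsto Qs$ and $s'\mapsto s'P$, together with cyclicity of the trace, show $\lambda_A=\lambda_B$ whenever $A$ is $\operatorname{SL}_n$-equivalent to $B$; by Theorem~\ref{equivalency} this means $\lambda_A$ depends only on the pair $(\operatorname{rank}A,\det A)$. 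Counting these pairs, ranks $0,1,\dots,n-1$ force $\det A=0$, giving $n$ classes, while full rank allows $\det A$ to be any of the $q-1$ nonzero scalars, giving $q-1$ further classes, for a total of $n+q-1$. Hence the adjacency matrix has at most $n+q-1$ distinct eigenvalues.

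Every step is routine given the earlier results; the only points requiring care are the orientation bookkeeping in the strong-connectivity argument and the observation that, although the adjacency matrix need not be symmetric when $n$ is odd and $\operatorname{char}(\Bbb F_q)\neq 2$, it is still normal, being a Cayley digraph over an abelian group (its conjugate transpose is the adjacency matrix of $\operatorname{Cay}(\operatorname{Mat}_n(\Bbb F_q),-\operatorname{SL}_n(\Bbb F_q))$, which commutes with it). Thus it remains diagonalizable, the character formula for its eigenvalues applies, and the counting argument goes through unchanged.
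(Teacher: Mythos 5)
Your proposal is correct and follows essentially the same route as the paper: connectivity and the diameter bound come from Theorem~\ref{SpecialUnitsGeneral}, the symmetry criterion from $\det(-M)=(-1)^n\det(M)$, and the eigenvalue bound from the character formula of Theorem~\ref{character theory} together with invariance of $\lambda_A$ on $\operatorname{SL}_n$-equivalence classes via Theorem~\ref{equivalency}. Your explicit count of the $n+q-1$ classes and the remark on normality of the adjacency matrix simply make precise details the paper leaves implicit.
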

\begin{proof}
As we explained in the proof of previous theorem, $1=\det(A) = \det(-A)$, i.e. $\operatorname{SL}_n(\Bbb F_q)$ is a symmetric set if and only if $n$ is even or $\operatorname{char}(\Bbb F_q)=2$. Hence if $n$ is even or if $\operatorname{char}(\Bbb F_q)=2$, then the digraph can be considered as a graph. Connectedness, regularity and the diameter proof is very similar to the proof of Proposition~\ref{connected} and follows from Theorem~\ref{SpecialUnitsGeneral}. To deduce the number of distinct eigenvalues, let $A \in \operatorname{Mat}_n(\Bbb F_q)$. Then recall the eigenvalue corresponding to $A$ is explicitly $\lambda_{A}= \sum_{s \in \operatorname{SL}_n(\Bbb F_q)} \chi( \operatorname{Tr}(As))$, and also note that if $A, B \in \operatorname{Mat}_n(\Bbb F_q)$ are $\operatorname{SL}_n$-equivalent then $\lambda_{A}=\lambda_{B}$ follows from a very similar calculation in the proof of Proposition~\ref{NumberOfEigenvalues}.
\end{proof}
Earlier in that section we proved that the unit-graph on $\operatorname{Mat}_2(\Bbb F_q)$ has at most three distinct eigenvalues and this implied that it is a strongly regular graph. Unfortunately, in general it is not the case for the special unit-graph on $\operatorname{Mat}_2(\Bbb F_q)$ except a few cases, but we can still compute its spectrum explicitly using the following theorem:
\begin{thm} \label{spect special}
The spectrum of the special unit-graph on $\operatorname{Mat}_2(\Bbb F_q)$ consists of $\lambda=q^3-q$ with multiplicity $1$, $\mu_0=-q$ with multiplicity $q^3+q^2-q-1$ and $\mu_{\delta}=q \sum_{\alpha \in \Bbb F_q^{\ast}} \chi(\alpha+ \delta \alpha^{-1})$ with multiplicity $q^3-q$ for any $\delta \in \Bbb F_q^{\ast}$. 
\end{thm}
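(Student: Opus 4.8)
The plan is to diagonalize the adjacency matrix by exploiting that the special unit-graph is a Cayley graph on the additive group of $\operatorname{Mat}_2(\Bbb F_q)$, so its eigenvectors are exactly the additive characters $\chi_A(s) = \chi(\operatorname{Tr}(As))$, $A \in \operatorname{Mat}_2(\Bbb F_q)$, with associated eigenvalue
\[ \lambda_A = \sum_{s \in \operatorname{SL}_2(\Bbb F_q)} \chi(\operatorname{Tr}(As)), \]
by the same character-theoretic description used in Proposition~\ref{NumberOfEigenvalues}. Repeating the computation there verbatim, with $\operatorname{SL}_2$ in place of $\operatorname{GL}_2$, shows that $\lambda_A = \lambda_B$ whenever $A$ and $B$ are $\operatorname{SL}_2$-equivalent, so by Theorem~\ref{equivalency} the eigenvalue $\lambda_A$ depends only on the pair $(\operatorname{rank} A, \det A)$. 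It therefore suffices to evaluate $\lambda_A$ on one representative of each class: the zero matrix (rank $0$), the matrix $\left[\begin{smallmatrix} 1 & 0 \\ 0 & 0\end{smallmatrix}\right]$ (rank $1$), and $\left[\begin{smallmatrix} 1 & 0 \\ 0 & \delta \end{smallmatrix}\right]$ (rank $2$, determinant $\delta$) for each $\delta \in \Bbb F_q^{\ast}$.

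The three evaluations are pure character-sum computations. For the zero matrix $\lambda_0 = |\operatorname{SL}_2(\Bbb F_q)| = q^3 - q$. For the rank-one representative $\operatorname{Tr}(As) = s_{11}$, so I would group the sum $\sum_{s} \chi(s_{11})$ according to the value $a = s_{11}$ and count the completions of $a$ to a matrix in $\operatorname{SL}_2(\Bbb F_q)$ --- there are $q^2$ of them when $a \neq 0$ and $q(q-1)$ when $a = 0$ --- which collapses to $q^2 \sum_{a \neq 0}\chi(a) + q(q-1) = -q^2 + q^2 - q = -q$. For the rank-two representative $\operatorname{Tr}(As) = s_{11} + \delta s_{22}$, the key manipulation is to fix the diagonal pair $(a,d) = (s_{11}, s_{22})$ and count the off-diagonal completions $(b,c)$ with $bc = ad - 1$: there are $q-1$ of them when $ad \neq 1$ and $2q - 1$ when $ad = 1$. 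Writing $2q - 1 = (q-1) + q$ splits the sum into $(q-1)\sum_{a,d}\chi(a + \delta d)$, which vanishes because $\chi$ is nontrivial and $\delta \neq 0$, plus $q\sum_{ad = 1}\chi(a + \delta d)$; setting $a = \alpha$, $d = \alpha^{-1}$ on the hyperbola $ad = 1$ yields exactly $q\sum_{\alpha \in \Bbb F_q^{\ast}} \chi(\alpha + \delta\alpha^{-1})$, the claimed Kloosterman expression.

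Finally, I would pin down the multiplicities. Because the graph is a Cayley digraph on the abelian group $(\operatorname{Mat}_2(\Bbb F_q),+)$, the $q^4$ characters $\{\chi_A\}$ form a complete eigenbasis, so the multiplicity contributed by each $\operatorname{SL}_2$-equivalence class is exactly its cardinality (and since $n = 2$ is even, $\operatorname{SL}_2(\Bbb F_q)$ is symmetric and the eigenvalues are real). There is a single rank-$0$ matrix, giving multiplicity $1$ for $q^3 - q$. The rank-$1$ matrices number $q^4 - 1 - |\operatorname{GL}_2(\Bbb F_q)| = q^3 + q^2 - q - 1$, the multiplicity of $\mu_0 = -q$. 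For each fixed $\delta \in \Bbb F_q^{\ast}$ the determinant map $\operatorname{GL}_2(\Bbb F_q) \to \Bbb F_q^{\ast}$ has fibers of size $|\operatorname{SL}_2(\Bbb F_q)| = q^3 - q$, which is the multiplicity of $\mu_\delta$; as a consistency check, $1 + (q^3 + q^2 - q - 1) + (q-1)(q^3 - q) = q^4$. I expect the main obstacle to be the rank-two sum: the bookkeeping of $\operatorname{SL}_2$-completions along and off the hyperbola $ad = 1$, together with the cancellation that isolates the Kloosterman sum. The rank-one count and the multiplicity argument are routine once the reduction to equivalence-class representatives is in place.
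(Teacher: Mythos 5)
Your proposal is correct, and its overall architecture coincides with the paper's: both use the Cayley-digraph character framework to get $\lambda_A=\sum_{s\in\operatorname{SL}_2(\Bbb F_q)}\chi(\operatorname{Tr}(As))$, both invoke $\operatorname{SL}_2$-equivalence (Theorem~\ref{equivalency}) to reduce to the representatives $0$, $\left[\begin{smallmatrix}1&0\\0&0\end{smallmatrix}\right]$, $\left[\begin{smallmatrix}1&0\\0&\delta\end{smallmatrix}\right]$, and both read off multiplicities as the sizes of the corresponding equivalence classes. Where you genuinely diverge is in the evaluation of the character sums, which is the bulk of the paper's proof: the paper partitions $\operatorname{SL}_2(\Bbb F_q)$ into three cases according to which entries vanish and sums $\chi(s_{11})$, resp.\ $\chi(s_{11}+\delta s_{22})$, over each case separately. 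You instead fix the diagonal $(a,d)=(s_{11},s_{22})$ and count off-diagonal completions $(b,c)$ with $bc=ad-1$, getting $q-1$ completions off the hyperbola $ad=1$ and $2q-1$ on it; the decomposition $2q-1=(q-1)+q$ then makes the $(q-1)\sum_{a,d}\chi(a+\delta d)$ piece vanish by orthogonality and isolates $q\sum_{\alpha\neq 0}\chi(\alpha+\delta\alpha^{-1})$ in one stroke (and the analogous completion count $q^2$ vs.\ $q(q-1)$ handles the rank-one case). Your bookkeeping is cleaner and less error-prone than the paper's zero-pattern enumeration, and it makes the appearance of the Kloosterman sum structurally transparent --- it is literally the restriction of the sum to the hyperbola $ad=1$ --- whereas in the paper it emerges only after recombining the three cases. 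The paper's case analysis, on the other hand, is the same sorting it reuses to set up both the rank-one and rank-two computations, so it amortizes across the two evaluations. One small point worth making explicit in your write-up: distinct equivalence classes could in principle share an eigenvalue (this indeed happens for small $q$, where the graph becomes strongly regular), so ``multiplicity'' here means the dimension contributed by each class, exactly as the paper tacitly treats it.
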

\begin{proof}
The special unit-graph on $\operatorname{Mat}_2(\Bbb F_q)$ is a simple regular graph, hence the largest eigenvalue should be the regularity i.e. $|\operatorname{SL}_2 (\Bbb F_q)|$ with multiplicity $1$ by the Perron-Frobenius theorem. Also from a different perspective, recall that we know \[ \lambda_{A} = \sum_{s \in \operatorname{SL}_2(\Bbb F_q)} \chi( \operatorname{Tr}(As)). \] If $A$ is the zero matrix, $\lambda_{A} = \sum_{s \in \operatorname{SL}_2(\Bbb F_q)} \chi(0)= |\operatorname{SL}_2 (\Bbb F_q) | = \frac{|\operatorname{GL}_2 (\Bbb F_q) |}{q-1}= q^3-q$. 

Next we want to show that $\mu_{0}$ is the eigenvalue corresponding to rank $1$ matrices, and $\mu_{\delta}$ is the eigenvalue corresponding to matrices of determinant $\delta$. To ease our calculations to follow up, first we sort out $\operatorname{SL}_2$-matrices. Let $s \in \operatorname{SL}_2(\Bbb F_q)$, then $s$ should be in one of the following forms. 

\textbf{Case 1:} Exactly two entries of $s$ are nonzero. Then, $s$ should be either in the form of $\begin{bmatrix}
0& \alpha\\
- \alpha^{-1} &0
\end{bmatrix}$ or $ \begin{bmatrix}
\alpha&0\\
0&\alpha^{-1}
\end{bmatrix} $ for some $\alpha \in \Bbb F_q^{\ast}$.

\textbf{Case 2:} Exactly three entries of $s$ are nonzero. Then, $s$ should be in the form of $\begin{bmatrix}
	0& \alpha\\
	 -\alpha^{-1} & \beta
	\end{bmatrix}$, $\begin{bmatrix}
		 \alpha &0\\
		\beta & \alpha^{-1} 
	\end{bmatrix}$, $\begin{bmatrix}
\alpha & \beta \\
0& \alpha^{-1} 
\end{bmatrix}$ or $\begin{bmatrix}
	\beta & \alpha \\
	-\alpha^{-1} &0
	\end{bmatrix}$ for some $\alpha,\beta \in \Bbb F_q^{\ast}$.
	
\textbf{Case 3:} None of the entries of $s$ is zero. Then, $s$ should be in the form of $\begin{bmatrix}
\alpha & \gamma \\
c  & \beta
\end{bmatrix}$ where $c=\gamma^{-1} (\alpha \beta -1)$ for some $\alpha,\gamma \in \Bbb F_q^{\ast}$ and $\beta \in \Bbb F_q^{\ast}  \setminus \{ \alpha^{-1} \}$.

Let $A = \begin{bmatrix}
1&0\\
0&0
\end{bmatrix}$. Then we have 
\begin{equation} \label{SL-eigen2}
\lambda_{A} = \sum_{s \in \operatorname{SL}_2(\Bbb F_q)} \chi( \operatorname{Tr}(As))= \sum_{ s \in \operatorname{SL}_2(\Bbb F_q)} \chi( s_{11})=  \sum_{ s \in \text{Case 1}} \chi( s_{11})+  \sum_{ s \in \text{Case 2}} \chi( s_{11})+ \sum_{ s \in \text{Case 3}} \chi( s_{11}).
\end{equation}
We also have
\begin{flalign*} 
\sum_{ s \in \text{Case 1}} \chi( s_{11})&= (q-1)\chi(0)+  \sum_{ \alpha \in \Bbb F_q^{\ast}} \chi(\alpha)  \\  
\sum_{ s \in \text{Case 2}} \chi( s_{11})&=  (q-1)^2 \chi(0)  + 3 \left[ \sum_{ \alpha \in \Bbb F_q^{\ast}} \chi(\alpha) \right] ( q-1 )\\
\sum_{ s \in \text{Case 3}} \chi( s_{11})&=  \left[ \sum_{ \alpha \in \Bbb F_q^{\ast}} \chi(\alpha) \right] ( q-1) (q-2).
\end{flalign*} 
We can plug the last three equations in Equation~\eqref{SL-eigen2}, and use the orthogonality relations in character theory (in particular $\sum_{ \alpha \in \Bbb F_q} \chi(\alpha)=0$) and we get $\lambda_{A}=-q$. The multiplicity is the number of rank $1$ matrices, hence it is $q^3+q^2-q-1$.

Let $B = \begin{bmatrix}
1&0\\
0&\delta
\end{bmatrix}$. Then we have 
\begin{equation} \label{SL-eigen3}
\lambda_{B} = \sum_{s \in \operatorname{SL}_2(\Bbb F_q)} \chi( \operatorname{Tr}(Bs))= \sum_{ s \in \operatorname{SL}_2(\Bbb F_q)} \chi( s_{11}+\delta s_{22}).
\end{equation}
We also have
\begin{flalign*} 
\sum_{ s \in \text{Case 1}} \chi(  s_{11}+\delta s_{22})&= (q-1)\chi(0)+  \sum_{ \alpha \in \Bbb F_q^{\ast}} \chi(\alpha+\delta \alpha^{-1}) &\\
\sum_{ s \in \text{Case 2}} \chi(s_{11}+\delta s_{22})&= \left[ \sum_{ \alpha \in \Bbb F_q^{\ast}} \chi(\delta \alpha) \right] ( q-1 ) + (2q-2) \sum_{ \alpha \in \Bbb F_q^{\ast}} \chi(\alpha+ \delta \alpha^{-1}) + ( q-1 ) \left[ \sum_{ \alpha \in \Bbb F_q^{\ast}} \chi(\alpha) \right]&\\
\sum_{ s \in \text{Case 3}} \chi(s_{11}+\delta s_{22}) &= \left[ \sum_{ \alpha \in \Bbb F_q^{\ast}} \left( \sum_{ \beta \in \Bbb F_q^{\ast} \setminus \{ \alpha^{-1} \} } \chi ( \alpha + \delta \beta ) \right) \right] ( q-1) = \left[ \sum_{ \alpha \in \Bbb F_q^{\ast}} \left (  - \chi(\alpha)- \chi(\alpha) \chi(\delta \alpha^{-1}) \right) \right] ( q-1).&
\end{flalign*}	

We can plug the last three equations in Equation~\eqref{SL-eigen3} and we get $\lambda_{B}=q \sum_{\alpha \in \Bbb F_q^{\ast}}\chi(\alpha)\chi(\delta \alpha^{-1})$, and the multiplicity follows from the number of matrices of determinant $\delta$. 
\end{proof}
 
\begin{cor}
The equation \[ \sum_{ s \in \operatorname{SL}_2(\Bbb F_q)} \chi( s_{11}+ \delta s_{22})= \sum_{ s \in \operatorname{SL}_2(\Bbb F_q)} \chi( s_{11}) \chi (\delta s_{22})= q \sum_{\alpha \in \Bbb F_q^{\ast}} \chi \left( \alpha + \frac{\delta}{\alpha} \right) \] holds for any $\delta \in \Bbb F_q$ and any non-trivial character $\chi$ on $\Bbb F_q$. In particular $\sum_{ s \in \operatorname{SL}_2(\Bbb F_q)}\chi( s_{11})= -q$. 
\end{cor}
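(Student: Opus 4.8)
The plan is to derive this corollary as an almost immediate consequence of the spectral computation in Theorem~\ref{spect special}. The key observation is that the theorem already computes the quantity $\lambda_{B} = \sum_{s \in \operatorname{SL}_2(\Bbb F_q)} \chi(s_{11} + \delta s_{22})$ for $B = \begin{bmatrix} 1 & 0 \\ 0 & \delta \end{bmatrix}$ and shows it equals $q \sum_{\alpha \in \Bbb F_q^{\ast}} \chi(\alpha)\chi(\delta \alpha^{-1})$. First I would simply read off this identity and rewrite $\chi(\alpha)\chi(\delta\alpha^{-1}) = \chi(\alpha + \delta\alpha^{-1})$ using multiplicativity of $\chi$ over addition in the exponent (the defining property $\chi(x)\chi(y) = \chi(x+y)$ of an additive character), which yields the claimed formula for all $\delta \in \Bbb F_q^{\ast}$. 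The middle equality in the statement, $\sum \chi(s_{11} + \delta s_{22}) = \sum \chi(s_{11})\chi(\delta s_{22})$, is again just the additive-character identity applied entrywise, so it requires no separate argument.

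The one genuine gap is that Theorem~\ref{spect special} was stated and proved only for $\delta \in \Bbb F_q^{\ast}$, whereas the corollary asserts the formula for all $\delta \in \Bbb F_q$, including $\delta = 0$. I would handle the boundary case $\delta = 0$ directly: for $\delta = 0$ the left-hand side is $\sum_{s \in \operatorname{SL}_2(\Bbb F_q)} \chi(s_{11})$, which is exactly the eigenvalue $\lambda_A$ already computed in the proof of Theorem~\ref{spect special} (the rank-one case, with $A = \begin{bmatrix} 1 & 0 \\ 0 & 0 \end{bmatrix}$), shown there to equal $-q$. On the right-hand side, setting $\delta = 0$ gives $q \sum_{\alpha \in \Bbb F_q^{\ast}} \chi(\alpha + 0) = q \sum_{\alpha \in \Bbb F_q^{\ast}} \chi(\alpha) = q(-1) = -q$ by the orthogonality relation $\sum_{\alpha \in \Bbb F_q} \chi(\alpha) = 0$. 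Thus both sides equal $-q$, establishing the $\delta = 0$ instance and in particular the final sentence $\sum_{s \in \operatorname{SL}_2(\Bbb F_q)} \chi(s_{11}) = -q$.

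There is essentially no main obstacle here; the corollary is a repackaging of the eigenvalue computations already carried out. If anything, the only point requiring mild care is the consistent use of the additive-character rule $\chi(x)\chi(y) = \chi(x+y)$ to move between the product form and the sum form, together with pinning down the $\delta = 0$ case separately since it falls outside the range of $\delta$ treated in the theorem. I would therefore present the proof in two short steps: first quote Theorem~\ref{spect special} for $\delta \neq 0$ and rewrite via the character identity, then verify $\delta = 0$ by hand using orthogonality. No new estimates, Kloosterman-sum bounds, or structural results are needed beyond what the theorem supplies.
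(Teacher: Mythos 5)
Your proposal is correct and matches the paper's (implicit) argument: the corollary is stated without a separate proof precisely because it is a direct repackaging of the eigenvalue computations $\lambda_A = -q$ and $\lambda_B = q\sum_{\alpha \in \Bbb F_q^{\ast}} \chi(\alpha)\chi(\delta\alpha^{-1})$ carried out in the proof of Theorem~\ref{spect special}, combined with the additive-character identity $\chi(x)\chi(y) = \chi(x+y)$. Your explicit handling of the $\delta = 0$ case via orthogonality is a sensible piece of added care (the theorem's statement only covers $\delta \in \Bbb F_q^{\ast}$), but it does not change the route; it fills in exactly what the paper leaves to the reader.
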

If we calculate the spectrum of the special unit-graph on $\operatorname{Mat}_2(\Bbb F_q)$ for some small $q$ values using Theorem~\ref{spect special}, we see that this graph is strongly regular for $q=2,3$ and $4$ since we have exactly three distinct eigenvalues in each case. However, it is not strongly regular when $q=5$. 

Similar to the definition of special unit-digraph, which is the Cayley digraph on $\operatorname{Mat}_n (\Bbb F_q)$ associated to the set of determinant $1$ matrices, one can pick an  $\alpha \in \Bbb F_q^{\ast}$ and define a new Cayley digraph $G_{\alpha}$ using the set of matrices with determinant $\alpha$. That is equivalent to saying that $G_{\alpha}$ is a Cayley digraph with the vertex set $\operatorname{Mat}_n(\Bbb F_q)$ in which there is a directed edge from $A$ to $B$ if and only if $\det (B-A) =\alpha $. It is easy to show that $G_{\alpha}$ is isomorphic to $G_{1}$ for any $\alpha \in \Bbb F_q^{\ast}$. Therefore, the spectrum of $G_{\alpha}$ is the same with the spectrum of $G_{1}$. Moreover, since each nonzero element gives us a new digraph, we get $q-1$ isomorphic digraphs. This gives us an edge-partition of the unit-graph on $\operatorname{Mat}_n(\Bbb F_q)$. Furthermore $\mu_{\delta}=q \sum_{\alpha \in \Bbb F_q^{\ast}} \chi(\alpha+\delta \alpha^{-1})$ in the previous theorem is exactly $q$ times the (classical) Kloosterman sum over $\Bbb F_q$. It is a well-known fact that this sum is bounded by the square root law i.e. \[ \left \| \sum_{\alpha \in \Bbb F_q^{\ast}} \chi\left( \alpha + \frac{\delta}{\alpha} \right) \right \| \leqslant 2 \sqrt{q}\] for any $\delta \in \Bbb F_q^{\ast}$, see e.g. \cite{Weil}. These ideas yield Theorem~\ref{intro thm}:

\begin{proof} (proof of Theorem~\ref{intro thm}.) Let $G_{\alpha}$ denote the Cayley digraph on $\operatorname{Mat}_2 (\Bbb F_q)$ associated to the set of determinant $\alpha$ matrices. We apply Theorem~\ref{SGT} on $G_{\alpha}$. We have \[ n_{\ast}= \frac{n}{|S|} \left( \max_{2 \leqslant i \leqslant n} \Big\| \sum_{s \in S} \chi_{i}(s) \Big\| \right) = \frac{2q^5 \sqrt{q}}{q^3-q}< \frac{2q^3 \sqrt{q}}{q-1}.\] Hence if $\dfrac{2q^3 \sqrt{q}}{q-1} < \sqrt{|X| |Y|}$, then the result follows from Theorem~\ref{SGT}.
\end{proof}

The last result shows that if $|X| = \Omega (q^{2.5})$ as $q \rightarrow \infty$, then $X$ contains at least two distinct matrices whose difference has determinant $\alpha$, the next non-example shows that $|X| = \Omega (q^{2})$ would not work.

\textbf{Non-example.} Let $X=\{A \in \operatorname{Mat}_2(\Bbb F_q) \mid a_{21}=0 \text{ and } a_{22}=0\}$. Then, notice that $|X|=q^2 < \frac{2q^3 \sqrt{q}}{q-1}$ and for any $A,B \in X$ we have $\det(A-B)=0$. 

Theorem~\ref{intro thm} surprisingly yields some results related to sum-product problem: 

\begin{cor}
If $A,B,C,D$ are subsets of $\Bbb F_q$ satisfying $\sqrt[4]{|A||B||C||D|} > \frac{\sqrt{2}q^{\frac{5}{4}}}{\sqrt{q-1}}$, then the subset $(A - B) (C -D)$ equals all of $\Bbb F_q$.
\end{cor}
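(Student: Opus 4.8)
The plan is to deduce this sum-product corollary directly from Theorem~\ref{intro thm} by encoding the product structure $(A-B)(C-D)$ into a determinant condition on $2\times 2$ matrices. Fix an arbitrary $\alpha \in \Bbb F_q^{\ast}$; I want to show $\alpha \in (A-B)(C-D)$, i.e. that there exist $a\in A,\, b\in B,\, c\in C,\, d\in D$ with $(a-b)(c-d)=\alpha$. The key observation is that $(a-b)(c-d)$ is exactly the determinant of a diagonal $2\times 2$ matrix $\operatorname{diag}(a-b,\, c-d)$, so I should build two families of matrices whose pairwise differences realize such diagonals.

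Concretely, I would set
\[
X = \left\{ \begin{bmatrix} a & 0 \\ 0 & c \end{bmatrix} : a \in A,\ c \in C \right\}, \qquad
Y = \left\{ \begin{bmatrix} b & 0 \\ 0 & d \end{bmatrix} : b \in B,\ d \in D \right\},
\]
so that $M-N = \operatorname{diag}(a-b,\, c-d)$ for $M\in X$ and $N\in Y$, and $\det(M-N) = (a-b)(c-d)$. Since the maps $(a,c)\mapsto \operatorname{diag}(a,c)$ and $(b,d)\mapsto \operatorname{diag}(b,d)$ are injective, we have $|X| = |A|\,|C|$ and $|Y| = |B|\,|D|$. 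The hypothesis $\sqrt[4]{|A||B||C||D|} > \frac{\sqrt{2}\,q^{5/4}}{\sqrt{q-1}}$ raised to the fourth power gives $|A||B||C||D| > \frac{4q^5}{(q-1)^2}$, and taking square roots yields $\sqrt{|X||Y|} = \sqrt{|A||B||C||D|} > \frac{2q^{5/2}}{q-1} = \frac{2q^3\sqrt{q}}{q-1}$, matching exactly the size threshold in Theorem~\ref{intro thm}.

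Applying Theorem~\ref{intro thm} with this $\alpha$, $X$, and $Y$ produces $M\in X$ and $N\in Y$ with $\det(M-N)=\alpha$, which unwinds to $(a-b)(c-d)=\alpha$ for the corresponding $a,b,c,d$, so $\alpha\in(A-B)(C-D)$. Since $\alpha\in\Bbb F_q^{\ast}$ was arbitrary, this covers every nonzero element; the element $0$ lies in $(A-B)(C-D)$ automatically as long as $A\cap B\neq\emptyset$ or $C\cap D\neq\emptyset$, which is forced by the size hypothesis (the sets are far too large to be disjoint within $\Bbb F_q$). Hence $(A-B)(C-D)=\Bbb F_q$.

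I do not anticipate a genuine obstacle here, since the corollary is essentially a dictionary translation: the only points requiring care are verifying that the diagonal embedding is injective (so the cardinality bookkeeping is exact rather than merely an inequality) and confirming the arithmetic that the fourth-root hypothesis matches the $\sqrt{|X||Y|}$ threshold after squaring. The mild subtlety worth flagging is the treatment of $0\in(A-B)(C-D)$, which Theorem~\ref{intro thm} does not directly supply since it only concerns determinant $\alpha\in\Bbb F_q^{\ast}$; this must be handled separately via the pigeonhole/size argument noted above.
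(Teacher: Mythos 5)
Your reduction to Theorem~\ref{intro thm} is the right idea, but the diagonal embedding is too small to ever meet the theorem's threshold, and an arithmetic slip hides this. From the hypothesis you correctly derive $\sqrt{|X||Y|}=\sqrt{|A||B||C||D|}>\frac{2q^{5/2}}{q-1}$, but you then assert $\frac{2q^{5/2}}{q-1}=\frac{2q^{3}\sqrt{q}}{q-1}$, which is false: $q^{3}\sqrt{q}=q^{7/2}$, so the actual threshold in Theorem~\ref{intro thm} is larger than what you have by a factor of $q$. This is not repairable within your construction: with purely diagonal matrices, $|X|=|A||C|\leqslant q^{2}$ and $|Y|=|B||D|\leqslant q^{2}$, hence $\sqrt{|X||Y|}\leqslant q^{2}$, while $\frac{2q^{3}\sqrt{q}}{q-1}>2q^{5/2}>q^{2}$ for every $q$. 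So Theorem~\ref{intro thm} can never be invoked for your $X$ and $Y$, no matter how large $A,B,C,D$ are.

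The missing idea, which is exactly what the paper does, is to pad the matrix sets with a free $(1,2)$-entry: take
\[
X=\left\{ \begin{bmatrix} a & t \\ 0 & c \end{bmatrix} : a\in A,\ c\in C,\ t\in\Bbb F_q \right\},
\qquad
Y=\left\{ \begin{bmatrix} b & s \\ 0 & d \end{bmatrix} : b\in B,\ d\in D,\ s\in\Bbb F_q \right\}.
\]
Differences of such upper-triangular matrices are still upper triangular, so $\det(M-N)=(a-b)(c-d)$ just as in your setup, but now $|X|=q|A||C|$ and $|Y|=q|B||D|$, giving $\sqrt{|X||Y|}=q\sqrt{|A||B||C||D|}>q\cdot\frac{2q^{5/2}}{q-1}=\frac{2q^{3}\sqrt{q}}{q-1}$, which is precisely the required threshold; the rest of your unwinding then goes through verbatim. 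Separately, your treatment of $0$ is also incorrect: the size hypothesis does \emph{not} force $A\cap B\neq\emptyset$ or $C\cap D\neq\emptyset$. For instance, take $A=C$ and $B=D$ with $A,B$ disjoint subsets of $\Bbb F_q$ each of size roughly $2q^{3/4}$; the hypothesis holds for large $q$, yet $0\notin(A-B)(C-D)$. (The paper's own proof silently treats only $\alpha\in\Bbb F_q^{\ast}$, so this boundary case is a defect of the statement itself, but the pigeonhole claim you make is not a valid way to dispatch it.)
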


\begin{proof}
Consider the set of matrices $X=\{M \in \operatorname{Mat}_2(\Bbb F_q) \mid m_{11} \in A, m_{22} \in C \text{ and } m_{21}=0\}$ and $Y=\{M \in \operatorname{Mat}_2(\Bbb F_q) \mid m_{11} \in B, m_{22} \in D \text{ and } m_{21}=0\}$. We have $|X|=q|A||C|$ and $|Y|=q|B||D|$. 

If $\sqrt[4]{|A||B||C||D|} > \frac{\sqrt{2}q^{\frac{5}{4}}}{\sqrt{q-1}}$, then $\sqrt{|X| |Y|}=q \sqrt{|A||B||C||D|} > \frac{2q^3 \sqrt{q}}{q-1}$. Hence, it follows from Theorem~\ref{intro thm} that for any $\alpha \in \Bbb F_q^{\ast}$ there exists some $M \in X$ and $N \in Y$ such that $M-N$ has determinant $\alpha$. This implies there exists some $m_{11} \in A$, $n_{11} \in B$, $m_{22} \in C$ and $n_{22} \in D$ such that $(m_{11}-n_{11})(m_{22}-n_{22})=\alpha$.
\end{proof}

\begin{cor}
If $A$ is a subset of $\Bbb F_q$ with cardinality $|A| > \frac{3} {2} q^{\frac{3}{4}}$, then the subset $(A - A) (A - A)$ equals all of $\Bbb F_q$.
\end{cor}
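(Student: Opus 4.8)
The plan is to deduce this statement directly from the preceding corollary by taking $A=B=C=D$. With this specialization the hypothesis $\sqrt[4]{|A||B||C||D|} > \frac{\sqrt{2}q^{5/4}}{\sqrt{q-1}}$ of that corollary collapses to $|A| > \frac{\sqrt{2}q^{5/4}}{\sqrt{q-1}}$, and its conclusion becomes exactly $(A-A)(A-A) = \Bbb F_q$. Thus it suffices to verify that the cleaner hypothesis $|A| > \frac{3}{2}q^{3/4}$ stated here implies the hypothesis of the previous corollary.

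First I would compare the two thresholds directly. Writing $\frac{\sqrt{2}q^{5/4}}{\sqrt{q-1}} = q^{3/4}\sqrt{\tfrac{2q}{q-1}}$, the desired implication holds as soon as $\sqrt{\tfrac{2q}{q-1}} \leq \frac{3}{2}$, i.e. $\frac{2q}{q-1} \leq \frac{9}{4}$, which rearranges to $8q \leq 9(q-1)$, i.e. $q \geq 9$. Hence for every prime power $q \geq 9$ one has $\frac{3}{2}q^{3/4} \geq \frac{\sqrt{2}q^{5/4}}{\sqrt{q-1}}$, so any $A$ with $|A| > \frac{3}{2}q^{3/4}$ automatically satisfies the hypothesis of the previous corollary and we are done in this range.

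It remains to dispose of the finitely many small cases $q \in \{2,3,4,5,7,8\}$, and this is the only place where care is needed, since the constant $\frac{3}{2}$ does not dominate $\sqrt{2q/(q-1)}$ uniformly (the ratio tends to $\sqrt{2}\approx 1.414$ from above as $q \to \infty$). Here I would exploit integrality of $|A|$. For $q \leq 5$ one checks that $\frac{3}{2}q^{3/4} \geq q = |\Bbb F_q|$, so no subset $A$ can satisfy $|A| > \frac{3}{2}q^{3/4}$ and the statement is vacuously true. For $q = 7$ and $q = 8$ the bound $\frac{3}{2}q^{3/4}$ lies strictly between $q-1$ and $q$, so $|A| > \frac{3}{2}q^{3/4}$ forces $|A| = q$, i.e. $A = \Bbb F_q$; but then $A - A = \Bbb F_q$ and $(A-A)(A-A) = \Bbb F_q$ trivially (alternatively, $|A| = q$ still exceeds $\frac{\sqrt{2}q^{5/4}}{\sqrt{q-1}}$, so the previous corollary again applies after the integer jump).

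The genuinely routine part is the numerical verification in these finitely many small cases; the only conceptual point, and the mild obstacle, is recognizing that the clean-looking constant $\frac{3}{2}$ is justified only asymptotically and that it is integrality of $|A|$, rather than any strengthening of the inequality, that rescues the borderline values $q = 7$ and $q = 8$.
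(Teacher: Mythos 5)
Your proof is correct and takes essentially the same route as the paper: the paper applies Theorem~\ref{intro thm} directly to the set $E=\{B \in \operatorname{Mat}_2(\Bbb F_q) \mid b_{11},b_{22}\in A,\ b_{21}=0\}$ (exactly the construction underlying the four-set corollary you invoke with $A=B=C=D$), and makes the identical threshold comparison $\frac{2q}{q-1}\leqslant \frac{9}{4} \iff q\geqslant 9$. If anything, your explicit handling of $q\in\{2,3,4,5,7,8\}$ via integrality of $|A|$ is more complete than the paper's, which dismisses the range $q\leqslant 9$ with ``the result is easy to show.''
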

\begin{proof}
Consider the set of matrices $E=\{B \in \operatorname{Mat}_2(\Bbb F_q) \mid b_{11}, b_{22} \in A \text{ and } b_{21}=0\}$. 

First notice that if $q \leqslant 9$ the result is easy to show. If $q>9$, then $|A| > \frac{3}{2} q^{\frac{3}{4}}$ implies $|E|=q |A|^2 > \frac{2q^3 \sqrt{q}}{q-1}$.

If $|E|=q  |A|^2 > \frac{2q^3 \sqrt{q}}{q-1}$, then it follows from Theorem~\ref{intro thm} that for any $\alpha \in \Bbb F_q^{\ast}$ there exists two matrices in $E$ with a difference matrix of determinant $\alpha$. This implies any $\alpha \in \Bbb F_q$ can be written as $(b_{11}-c_{11})(b_{22}-c_{22})=\alpha$ for some matrices $B,C \in E$ and the result follows.
\end{proof}

\appendix
\section{On Spectral Theory Of Cayley Digraphs} 

Let $H$ be a finite abelian group and $S$ be a subset of $H$. The \emph{Cayley digraph} $\operatorname{Cay}(H,S)$ is the digraph whose vertex set is $H$ and there is an edge from $u$ to $v$ (denoted by $u \rightarrow v$) if and only if $v-u \in S$. By definition $\operatorname{Cay}(H,S)$ is a simple digraph with $d^{+}(u)=d^{-}(u)=|S|$. Furthermore, if we have a Cayley digraph, then we can find its spectrum easily using characters in representation theory, see \cite{Serre} for a rigorous treatment on character theory. A function $\chi: H \longrightarrow \mathbb{C}$ is a \emph{character} of $H$ if $\chi$ is a group homomorphism of $H$ into the multiplicative group $\mathbb{C}^{\ast}$. If $\chi(h)=1$ for every $h\in H$, we say $\chi$ is the \emph{trivial character}. The following theorem is a very important well-known fact, see e.g. \cite{Brouwer-Spectra}: 

\begin{thm} \label{character theory}
Let $\mathbb{A}$ be an adjacency matrix of a Cayley digraph $\operatorname{Cay}(H,S)$. Let $\chi$ be a character on $H$. Then the vector $(\chi(h))_{h \in H}$ is an eigenvector of $\mathbb{A}$, with eigenvalue $\sum_{s \in S} \chi(s)$. In particular, the trivial character corresponds to the trivial eigenvector $\boldsymbol{1}$ with eigenvalue $|S|$.
\end{thm}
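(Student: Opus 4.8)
The plan is to verify the eigenvector equation directly by computing the action of $\mathbb{A}$ on the vector $\mathbf{x} = (\chi(h))_{h \in H}$ one coordinate at a time. First I would fix a vertex $u \in H$ and write out the $u$-th coordinate of $\mathbb{A}\mathbf{x}$ straight from the definition of the adjacency matrix: since $\mathbb{A}_{u,v} = 1$ exactly when $v - u \in S$ and equals $0$ otherwise, we have
\[
(\mathbb{A}\mathbf{x})_u = \sum_{v \in H} \mathbb{A}_{u,v}\,\chi(v) = \sum_{\substack{v \in H \\ v - u \in S}} \chi(v).
\]

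The key step is the change of summation variable $v = u + s$. As $s$ ranges over $S$, the element $v = u+s$ ranges bijectively over exactly those $v$ with $v - u \in S$, so the sum becomes $\sum_{s \in S} \chi(u+s)$. Next I would invoke the defining property of a character, namely that $\chi$ is a homomorphism into $\mathbb{C}^{\ast}$, to factor $\chi(u+s) = \chi(u)\chi(s)$. This pulls the factor $\chi(u)$ out of the sum uniformly, giving
\[
(\mathbb{A}\mathbf{x})_u = \chi(u) \sum_{s \in S} \chi(s) = \left( \sum_{s \in S} \chi(s) \right) \mathbf{x}_u.
\]
Since this identity holds for every coordinate $u$, we conclude $\mathbb{A}\mathbf{x} = \lambda \mathbf{x}$ with $\lambda = \sum_{s \in S}\chi(s)$, which establishes the main claim. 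For the final assertion I would simply specialize to the trivial character $\chi \equiv 1$: then $\mathbf{x} = \boldsymbol{1}$ and $\lambda = \sum_{s \in S} 1 = |S|$.

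The argument carries essentially no technical obstacle; the only point deserving care is the substitution $v = u+s$, where one must confirm that it is a genuine bijection between $S$ and the neighborhood $\{\, v : v-u \in S \,\}$, which is immediate from the cancellation law in $H$. Conceptually, the substance of the result is that it is precisely the homomorphism property that makes $\chi(u)$ factor out identically across all coordinates; this is what turns the character vector into an honest eigenvector, since without that property the product $\mathbb{A}\mathbf{x}$ would be an arbitrary vector rather than a scalar multiple of $\mathbf{x}$.
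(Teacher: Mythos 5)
Your proof is correct and follows essentially the same route as the paper's: compute the $u$-th coordinate of $\mathbb{A}\mathbf{x}$ as a sum over the out-neighborhood, substitute $v = u+s$, and factor $\chi(u+s)=\chi(u)\chi(s)$ to pull out $\chi(u)$. Your added remarks on the bijectivity of the substitution and the explicit specialization to the trivial character are fine but not a different argument.
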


\begin{proof} 
Let $u_1,u_2, \cdots, u_n$ be an ordering of the vertices of the digraph and let $\mathbb{A}$ correspond to this ordering. Pick any $u_i$. Then we have \[ \sum_{j=1}^{n} \mathbb{A}_{ij} \chi(u_j)= \sum_{u_{i} \rightarrow u_{j}} \chi(u_{j})=\sum_{s \in S} \chi(u_{i}+s)= \sum_{s \in S} \chi(u_{i})\chi(s)=\bigg[\sum_{s \in S} \chi(s)\bigg]\chi(u_{i}).\]
\end{proof}

Notice that we get $|H|$ many eigenvectors as demonstrated in the previous theorem, and they are all distinct since they are orthogonal by \emph{character orthogonality}, see \cite{Serre}. This means we know all of the eigenvectors of a Cayley digraph explicitly assuming we know all of the characters of $H$. 

Theorem~\ref{SGT} (viz. spectral gap theorem) below is a very important and widely used tool in graph theory by itself, see \cite{Demiroglu1} for the proof.
\begin{thm}[Spectral Gap Theorem For Cayley Digraphs] \label{SGT}
Let $Cay(H,S)$ be a Cayley digraph of order $n$. Let $\{ \chi_i \}_{i=1,2, \cdots, n}$ be the set of all distinct characters on $H$ such that $\chi_{1}$ is the trivial one. Define \[ n_{\ast}= \frac{n}{|S|} \left( \max_{2 \leqslant i \leqslant n} \Big\| \sum_{s \in S} \chi_{i}(s) \Big\| \right) \] and let $X,Y$ be subsets of vertices of $Cay(H,S)$. If $\sqrt{|X||Y|} > n_{\ast}$, then there exists a directed edge between a vertex in $X$ and a vertex in $Y$. In particular if $|X|> n_{\ast}$, then there exists at least two distinct vertices of $X$ with a directed edge between them.
\end{thm}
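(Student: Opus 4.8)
The plan is to prove this via the expander mixing lemma, adapted to the (possibly non-symmetric) adjacency matrix $\mathbb{A}$ of the Cayley digraph. Theorem~\ref{character theory} and the remark following it already supply the essential spectral input: the character vectors $v_i = (\chi_i(h))_{h\in H}$ are eigenvectors of $\mathbb{A}$ with eigenvalues $\lambda_i = \sum_{s\in S}\chi_i(s)$, and by character orthogonality they form an orthogonal basis of $\mathbb{C}^n$. I would normalize them to an orthonormal basis $u_i = v_i/\sqrt{n}$, noting that the trivial character yields $u_1 = \tfrac{1}{\sqrt n}\mathbf{1}$ with eigenvalue $\lambda_1 = |S|$.

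First I would record the counting identity: if $\mathbf{1}_X,\mathbf{1}_Y$ denote the indicator vectors of $X$ and $Y$, then the number of directed edges from $X$ to $Y$ is $e(X,Y) = \mathbf{1}_X^{T}\mathbb{A}\,\mathbf{1}_Y = \langle \mathbb{A}\mathbf{1}_Y, \mathbf{1}_X\rangle$ (the indicator vectors being real). Writing $a_i = \langle \mathbf{1}_X, u_i\rangle$ and $b_i = \langle \mathbf{1}_Y, u_i\rangle$ and expanding in the orthonormal eigenbasis gives $e(X,Y) = \sum_i \lambda_i \overline{a_i}\,b_i$. Since $a_1 = |X|/\sqrt n$ and $b_1 = |Y|/\sqrt n$, the trivial-character term is exactly $\tfrac{|S||X||Y|}{n}$, and I would isolate it to write $e(X,Y) = \tfrac{|S||X||Y|}{n} + \sum_{i\geq 2}\lambda_i\overline{a_i}b_i$.

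The estimate then comes from bounding the tail. I would use $\left| \sum_{i\geq 2}\lambda_i\overline{a_i}b_i\right| \leq (\max_{i\geq2}\|\lambda_i\|)\sum_{i\geq2}|a_i||b_i|$, followed by Cauchy--Schwarz and Parseval, $\sum_i|a_i|^2 = \|\mathbf{1}_X\|^2 = |X|$ and similarly for $Y$, to conclude $\sum_{i\geq2}|a_i||b_i|\leq\sqrt{|X||Y|}$. Hence $e(X,Y) \geq \tfrac{|S||X||Y|}{n} - (\max_{i\geq2}\|\lambda_i\|)\sqrt{|X||Y|}$, and the right-hand side is strictly positive precisely when $\tfrac{|S|}{n}\sqrt{|X||Y|} > \max_{i\geq2}\|\lambda_i\|$, that is, when $\sqrt{|X||Y|} > n_{\ast}$; a positive edge count produces the asserted directed edge. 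The "in particular" statement is the case $Y = X$: as $\operatorname{Cay}(H,S)$ is simple, $0\notin S$ and there are no self-loops, so $e(X,X)>0$ forces an edge between two distinct vertices of $X$.

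The main obstacle I anticipate is conceptual rather than computational, namely that $\mathbb{A}$ need not be symmetric, since nothing forces $S = -S$; consequently the eigenvalues $\lambda_i$ can be genuinely complex and the real spectral theorem is unavailable. The saving feature, which must be used carefully, is that character orthogonality supplies an orthonormal eigenbasis outright, so that Parseval and Cauchy--Schwarz remain in force and the entire argument goes through with the modulus $\|\lambda_i\|$ in place of a real eigenvalue throughout.
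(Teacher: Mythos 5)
Your proof is correct: the expander-mixing computation in the orthonormal character eigenbasis --- isolating the trivial-character term $\frac{|S||X||Y|}{n}$ and bounding the tail by $\left(\max_{i\geqslant 2}\big\|\lambda_i\big\|\right)\sqrt{|X||Y|}$ via Cauchy--Schwarz and Parseval --- is exactly the standard argument, and it is the one in \cite{Demiroglu1}, to which the paper defers (the paper states Theorem~\ref{SGT} without including a proof). Your two points of care are also handled soundly: normality of $\mathbb{A}$ (equivalently, the unitary character eigenbasis) substitutes for the real spectral theorem when $S \neq -S$, and simplicity of the digraph ($0 \notin S$, as asserted in the paper's definition) converts $e(X,X)>0$ into an edge between two \emph{distinct} vertices of $X$.
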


\end{document}